\def\@setcopyright{}
\def\serieslogo@{}
\newcommand{\boldsym}[1]{\boldsymbol{#1}}
\newcommand\bn{\boldsym{n}}
\newcommand{\dbar}{\partial}
\newcommand{\ddbar}{\overline\partial}
\newcommand{\ol}{\overline}
\newcommand{\norm}[1]{\left\Vert#1\right\Vert}
\newcommand{\set}[1]{\left\{#1\right\}}
\DeclareMathOperator{\Ker}{Ker}
\newcommand{\cali}[1]{\mathscr{#1}}
\newcommand{\cH}{\cali{H}}
\newcommand{\mT}{\mathcal{T}}
\newcommand{\R}{\mathbb{R}}
\newcommand{\ov}{\overline}
\newcommand{\til}[1]{\widetilde{#1}}
\newcommand{\wi}{\widetilde}
\def\Im{{\rm Im}}
\DeclareMathOperator{\supp}{supp}
\DeclareMathOperator{\Dom}{Dom}
\def\cL{\mathscr{L}}
\theoremstyle{plain}
\newtheorem{theorem}{Theorem}[section]
\newtheorem{lemma}[theorem]{Lemma}
\newtheorem{corollary}[theorem]{Corollary}
\newtheorem{proposition}[theorem]{Proposition}
\newtheorem{definition}[theorem]{Definition}
\newtheorem{remark}[theorem]{Remark}
\numberwithin{equation}{section}
\begin{document}
	
	\title[Asymptotics of Spectral Functions of Lower Energy Forms on Weakly 1-Complete Manifolds]
	{Asymptotics of Spectral Functions of Lower Energy Forms on Weakly 1-Complete Manifolds}

\author[]{Xiquan Peng}
\address{School of Mathematical Sciences, Fudan University, Shanghai 200433, China}
\thanks{}
\email{23110180028@m.fudan.edu.cn}

\author[]{Guokuan Shao}
\address{School of Mathematics (Zhuhai), Sun Yat-sen University, Zhuhai 519082, Guangdong, China}
\thanks{Guokuan Shao was supported by NSFC Grant No.12001549 and Natural Science Foundation of Guangdong Province Grant No.  2023A1515030017}
\email{shaogk@mail.sysu.edu.cn}

\author[]{Wenxuan Wang}
\address{School of Mathematics (Zhuhai), Sun Yat-sen University, Zhuhai 519082, Guangdong, China}  
\email{wangwx67@mail2.sysu.edu.cn}

\keywords{holomorphic Morse inequalities, lower energy form, weakly $1$-complete manifold}
\subjclass[2020]{Primary: 32A25, 32L10}
\date{}

\begin{abstract}
Let $M$ be a weakly $1$-complete manifold. We show that the optimal fundamental estimate holds true on $M$ with mild conditions, then we establish the weak Morse inequalities for lower energy forms on $M$. We also study the case for $q$-convex manifolds.
\end{abstract}

\maketitle
\section{Introduction}

The study of holomorphic Morse inequalities dates back to Demailly's classical work \cite{Dem:85}. It was motivated by Grauert-Riemenschneider conjecture \cite{Siu:84}, which stated that if a holomorphic line bundle $L$ is semi-positive and positive at least one point, then $L$ is big. Inspired by Witten's analytic proof \cite{Wit82} of the classical Morse inequality, Demailly replaced the Morse function by the Hermitian metric of $L$ and the Hessian of the Morse function by the curvature form of $L$. Since then the holomorphic Morse inequalities have been explored intensively during the last four decades. Bismut \cite{Bis87} provided a heat kernel proof involving probability theory. Berman \cite{Be04} investigated asymptotics of Bergman kernel functions and then deduced a local version of holomorphic Morse inequalities by scaling technique and standard arguments from functional analysis. Ma-Marinescu \cite{MM07} gave another heat kernel proof inspired by Bismut-Lebeau\cite{BisLe91}. Bouche \cite{Bou:89} considered the $q$-convex case. Marinescu \cite{M:92} established a result on weakly $1$-complete manifold. Recently, Hsiao and his collaborators \cite{HM12, HsiaoLi:16,HHLS20,HS20} obtained variant versions in Cauchy-Riemann manifolds and complex manifolds with boundary. Besides solving the Grauert-Riemenschneider conjecture, the holomorphic Morse inequalities also give some key evidence towards the Green-Griffiths-Lang conjecture\cite{Dem11}.

The formulas of the holomorphic Morse inequalities on non-compact manifolds \cite[Theorem1.2-Theorem1.6]{LSW} can not be established globally in general. The key ingredients to deal with such various settings on non-compact manifolds rely on analysis about positivities of bundles or positivities of boundaries of manifolds. Li-Shao-Wang \cite{LSW} introduced the concept of optimal fundamental estimate and gave a unified proof of the weak holomorphic Morse inequalities on several non-compact settings. They also studied asymototics of spectral function of lower energy forms and obtained weak holomorphic Morse inequalities for lower energy forms on complete Hermitian manifolds. 

In this paper, we mainly focus on weakly $1$-complete manifolds. The motivation is to establish a version of holomorphic Morse inequalities on non-compact setting. Our results could provide a tool to study embedding problems of non-compact manifolds. We will show that the optimal fundamental estimate holds on weakly $1$-complete manifolds and establish a version of Morse inequalities for lower energy forms on such manifolds.

The following is our main result. An asymptotics of dimensions of the lower energy form spaces for weakly $1$-complete manifolds with some specifical line bundle is established. For the notations and definitions, we refer the readers to Section 2 and Section 3.
\begin{theorem}\label{maint}
	Let $M$ be a weakly $1$-complete manifold of dimension $n$. Let $(L,h^L)$ and $(E,h^E)$ be 
 holomorphic Hermitian line bundles on $M$. Assume $K$ is a compact subset and $(L,h^L)$ is Griffiths $q$-positive on $X\setminus K$ with $q\geq 1$. Then there exists a sequence of relatively compact sets $\{M_i\}_{i=1}^{\infty}$ of $M$, such that $K\subset M_1$, $\cup_{i} M_i=M$ and $M_i\Subset M_{i+1}$ , and a sequence of Hermitian metrics $\{\omega_i\}_{i=1}^{\infty}$ on $M$. As $k\rightarrow \infty$, 
    for any $j\geq q$ and $i\in \mathbb{N}^{+}$ , we have
	\begin{equation}
		\dim\mathcal{E}^j(k^{-N_{0}},\square^{k,E}_{i}) \leq\frac{k^{n}}{n!} \int_{K(j)}(-1)^j c_1(L,h^L)^n+o(k^{n}),
	\end{equation}
	where $\square^{k,E}_{i}$ is the Gaffney extension of $\ddbar^{k,E} \ddbar^{k,E*}_{i}+\ddbar^{k,E*}_{i} \ddbar^{k,E}$ and  $\ddbar^{k,E*}_{i}$ is Hilbert adjoint of $\ddbar^{k,E}$ under the norm $ \|\cdot\|_{i}$ on $M_i$, which is induced by $\omega_i, h^L$ and $h^E$.
\end{theorem}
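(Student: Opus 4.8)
The plan is to establish the estimate by combining three ingredients: a local holomorphic Morse inequality on the compact exhaustion pieces, the optimal fundamental estimate on the weakly 1-complete manifold, and a spectral-space comparison that transfers the local bounds to the lower energy forms of the Gaffney extension. Since the statement concerns $\dim\mathcal{E}^j(k^{-N_0},\square^{k,E}_i)$, the dimension of the space spanned by eigenforms with eigenvalue at most $k^{-N_0}$, the first step would be to recall that the optimal fundamental estimate (which the paper proves holds on $M$ under these hypotheses) forces the $(n,j)$-forms in this low-energy space to concentrate, in an $L^2$ sense, away from the part of $M_i$ where the curvature of $L$ is positive enough to give a spectral gap. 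I would make this precise by testing the quadratic form $\|\ddbar^{k,E}s\|^2_i + \|\ddbar^{k,E*}_i s\|^2_i$ against the Bochner--Kodaira--Nakano identity and using Griffiths $q$-positivity of $(L,h^L)$ on $M\setminus K$.

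The second step is the heart of the matter: a local Morse inequality. I would choose the Hermitian metrics $\omega_i$ so that on each relatively compact $M_i$ the geometry is controlled (for instance agreeing with a fixed metric on $K$ and tapering appropriately near $\partial M_i$), and then invoke the scaling/Bergman-kernel asymptotics of the type developed by Berman and by Hsiao and collaborators. Concretely, I would study the spectral kernel $B^{k,E}_{\leq k^{-N_0}}(x)$ associated with the projection onto $\mathcal{E}^j(k^{-N_0},\square^{k,E}_i)$ and show that, after rescaling coordinates by $\sqrt{k}$ near a point $x$, the operator $\square^{k,E}_i$ converges to a model harmonic-oscillator-type operator whose ground-state dimension is nonzero only at points of $M(j)$ (the set where the curvature has exactly $j$ negative eigenvalues). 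Integrating the pointwise bound $\limsup_k k^{-n} B^{k,E}_{\leq k^{-N_0}}(x) \leq (2\pi)^{-n}\mathbf{1}_{M(j)}(x)\,|\det \dot{R}^L_x|$ over $M_i$ and passing through the identity $\frac{k^n}{n!}(-1)^j c_1(L,h^L)^n = \frac{1}{(2\pi)^n}(-1)^j(\det \dot{R}^L)\,dV$ would yield the stated integral over $K(j)$; here the restriction to $K(j)$ rather than $M_i(j)$ comes from the fact that outside $K$ the form is $q$-positive, so $M(j)\cap(M\setminus K)=\emptyset$ for $j\geq q$, killing the contribution away from $K$.

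The third step is to reconcile the spectral space on $M_i$ (with the adjoint $\ddbar^{k,E*}_i$ taken with respect to $\|\cdot\|_i$) with the global analysis. Because the metrics $\|\cdot\|_i$ differ, I would be careful that the Gaffney extension $\square^{k,E}_i$ is self-adjoint and that the spectral spaces are finite dimensional for the low-energy cutoff $k^{-N_0}$; finite-dimensionality should follow from the fundamental estimate together with the relative compactness of $M_i$ (ellipticity away from the boundary plus the coercive estimate gives a Gårding-type inequality, hence discreteness of the spectrum below a threshold). The dominated convergence needed to integrate the pointwise asymptotic requires a uniform-in-$k$ upper bound for the spectral kernel, which is where I would lean on the optimal fundamental estimate to produce an $L^2$ a priori bound independent of $k$.

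The step I expect to be the main obstacle is the rescaling argument for the spectral kernel on a manifold with boundary behavior built into $\omega_i$ and $\|\cdot\|_i$: one must ensure the local model extraction is uniform and that the low-energy eigenforms do not leak mass toward $\partial M_i$ in a way that corrupts the pointwise limit. Controlling this boundary leakage uniformly in $k$ — equivalently, showing the spectral kernel is negligible near $\partial M_i$ at scale $1/\sqrt{k}$ — is precisely where the weakly 1-complete hypothesis and the tailored choice of $\{M_i\}$ and $\{\omega_i\}$ must do the work, and I would expect the bulk of the technical effort to go into making that localization rigorous.
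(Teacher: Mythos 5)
Your overall skeleton (prove an optimal fundamental estimate, combine it with local spectral-function asymptotics integrated over a compact set, then use $M(j)\cap(M\setminus K)=\emptyset$ for $j\geq q$ to shrink the integration domain to $K(j)$) matches the paper's architecture, but the two ideas that actually carry the proof are missing. First, and most seriously: you claim that testing the quadratic form against Bochner--Kodaira--Nakano and ``using Griffiths $q$-positivity of $(L,h^L)$ on $M\setminus K$'' yields a spectral gap. For $(0,j)$-forms the curvature term is a sum of $j$ eigenvalues of the endomorphism $Q_L$, and Griffiths $q$-positivity still permits $q-1$ eigenvalues to be arbitrarily negative; for $q\geq 2$ such a sum can be negative, and no gap follows. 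The paper's Lemma \ref{lowbd_rho_lem} is exactly the device that repairs this: the metric is chosen \emph{anisotropically}, scaled by $l$ in the $q-1$ possibly-negative eigendirections and by $l^{-1}$ in the $n-q+1$ positive ones, so that positive eigenvalues are amplified to $l d_q$ while negative ones are damped to $d_1/l$; then the sum of any $j\geq q$ eigenvalues exceeds any prescribed $C_L$ once $l$ is large. Your stated rationale for choosing $\omega_i$ (agreeing with a fixed metric on $K$, tapering near the boundary) plays no role and would not produce this. Second, you never invoke the weakly $1$-complete hypothesis where it is actually needed: the sublevel sets $M_i=\{\varphi<c_i\}$ of the plurisubharmonic exhaustion are pseudoconvex, so in the Morrey--Kohn--H\"ormander estimate with boundary term (Theorem \ref{BKNwithbd}) the Levi-form integral over $bM_i$ is nonnegative and can be discarded; without this, the estimate of Lemma \ref{keylem} on $B^{0,j}(M_i,L^k\otimes E)$ fails.

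Relatedly, the obstacle you single out as the bulk of the work --- uniformity of the rescaled spectral kernel and ``boundary leakage'' near $bM_i$ --- is self-inflicted and does not arise on the paper's route. One does not integrate the pointwise asymptotics over all of $M_i$: once the optimal fundamental estimate (Proposition \ref{1coxFE}) holds, any $s\in\mathcal{E}^j(k^{-N_0},\square^{k,E}_i)$ satisfies $\|s\|_i^2\leq c_k\int_{K'}|s|^2\,dv_M$ with $c_k\to 1$ (plug the spectral decomposition into the estimate, as in the proof of Theorem \ref{main1}), so summing over an orthonormal basis gives $\dim\mathcal{E}^j(k^{-N_0},\square^{k,E}_i)\leq c_k\int_{K'}B^j_{\leq k^{-N_0}}(x)\,dv_M(x)$ with $K'\Subset M_i$ a fixed compact subset. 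The Hsiao--Marinescu pointwise bounds (Theorems \ref{sfae} and \ref{sfbd}) are then applied only on the interior compact set $K'$, where they hold as stated; no uniform control of the spectral kernel up to the boundary, and no new rescaling analysis, is required.
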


We also obtain a similar result on $q$-convex manifolds.

\begin{corollary}\label{mainc}
	Let $M$ be a $q$-convex manifold of dimension $n$ and $1\leq q \leq n$. Let $(L,h^L)$ and $(E,h^E)$ be holomorphic Hermitian line bundles on $X$. Suppose the curvature $\Theta_L$ of $L$  has at least $n-s+1$ non-negative eigenvalues on $M\setminus K$ for a compact subset $K$ with $1\leq s\leq n$.
	Then for each $s+q-1\leq j\leq n$, we deduce the estimate for the dimension of the $j$-th lower energy form spaces $\mathcal{E}^j(k^{-N_{0}},\square^{k,E}_{c,\chi})$,
	\begin{equation}
		\dim\mathcal{E}^j(k^{-N_{0}},\square^{k,E}_{c,\chi}) \leq\frac{k^{n}}{n!} \int_{K(j)}(-1)^j c_1(L,h^L)^n+o(k^{n}),
	\end{equation}
	where $\square^{k,E}_{c,\chi}$ is the Gaffney extension of $\ddbar^{k,E} \ddbar^{k,E*}_{c,\chi}+\ddbar^{k,E*}_{c,\chi} \ddbar^{k,E}$. 
\end{corollary}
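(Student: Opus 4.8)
The plan is to run the argument behind Theorem~\ref{maint} in the $q$-convex setting, the role played there by the plurisubharmonic exhaustion of a weakly $1$-complete manifold being taken over here by a rapidly convex modification of the $q$-convex exhaustion. Let $\varphi$ be a smooth exhaustion of $M$ whose complex Hessian $\mathrm{i}\partial\overline\partial\varphi$ has at least $n-q+1$ positive eigenvalues on $M\setminus K$; after enlarging $K$ I may assume $K=\{\varphi\le c_0\}$. Fix a smooth convex increasing function $\chi$ that is constant on $(-\infty,c_0]$, set $h^L_\chi:=h^Le^{-\chi(\varphi)}$, and use the weight $e^{-k\chi(\varphi)}$ on $L^k\otimes E$; taking $\overline\partial$ on the complex of compactly supported forms together with its formal adjoint in this weighted inner product is exactly what the subscript $c,\chi$ in $\square^{k,E}_{c,\chi}$ encodes. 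Under this twist the effective curvature of $L$ on $M\setminus K$ becomes
\begin{equation*}
    \Theta_{L,\chi}=\Theta_L+\chi'(\varphi)\,\mathrm{i}\partial\overline\partial\varphi+\chi''(\varphi)\,\mathrm{i}\partial\varphi\wedge\overline\partial\varphi .
\end{equation*}

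First I would carry out the eigenvalue bookkeeping that produces the index $s+q-1$. On $M\setminus K$ the hypothesis yields a subspace $V_A$ of dimension $n-s+1$ on which $\Theta_L\ge 0$, and the $q$-convexity yields a subspace $V_B$ of dimension $n-q+1$ on which $\mathrm{i}\partial\overline\partial\varphi>0$; since $\chi'>0$ and $\chi''\ge 0$, the form $\Theta_{L,\chi}$ is strictly positive on $V_A\cap V_B$, whose dimension is at least $(n-s+1)+(n-q+1)-n=n-(s+q-1)+1$. Thus $(L,h^L_\chi)$ is Griffiths $(s+q-1)$-positive on $M\setminus K$, which is the hypothesis of Theorem~\ref{maint} with $q$ replaced by $s+q-1$, and this explains the range $j\ge s+q-1$. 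Because $\chi$ is constant near $K$, the twist does not alter $\Theta_L$ there, so the index sets and the first Chern form are unchanged on $K$; moreover for $j\ge s+q-1$ the set $K(j)$ cannot meet $M\setminus K$, since both $\Theta_L$ and $\Theta_{L,\chi}$ have at most $s+q-2$ non-positive eigenvalues there. Hence the right-hand side computed for $(L,h^L_\chi)$ coincides with the one written for $(L,h^L)$.

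The heart of the matter, and the step I expect to be the main obstacle, is the optimal fundamental estimate for $(0,j)$-forms with $j\ge s+q-1$ on $M\setminus K$. Here one must choose $\chi$ convex increasing fast enough that the Bochner--Kodaira--Nakano identity for the weighted metric yields a coercive lower bound: the $n-q+1$ positive eigenvalues of $\mathrm{i}\partial\overline\partial\varphi$, amplified by $\chi'$, together with the rank-one contribution $\chi''\,\mathrm{i}\partial\varphi\wedge\overline\partial\varphi$, must dominate the at most $s-1$ negative directions of $\Theta_L$ and the at most $q-1$ non-positive directions of $\mathrm{i}\partial\overline\partial\varphi$ once these are assembled into the curvature operator $[\Theta_{L,\chi},\Lambda]$ acting on $(0,j)$-forms. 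This is the $q$-convex analogue of the estimate that Theorem~\ref{maint} derives from the plurisubharmonic exhaustion (cf. \cite{MM07}), and the delicate point is calibrating the growth of $\chi$ against a fixed background metric so that the negative eigenvalues scaled by $\chi'$ do not overwhelm the gain; this is precisely what forces the $\chi''$ term and the use of the compactly supported complex.

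Once the fundamental estimate is in hand, the remaining argument is the same as in the proof of Theorem~\ref{maint}: the estimate confines the low-lying spectrum of $\square^{k,E}_{c,\chi}$ to a neighborhood of $K$, so the spectral space $\mathcal{E}^j(k^{-N_0},\square^{k,E}_{c,\chi})$ may be compared with its local model there, and the asymptotic heat-kernel analysis for lower energy forms as in \cite{LSW} gives
\begin{equation*}
    \dim\mathcal{E}^j(k^{-N_0},\square^{k,E}_{c,\chi})\le\frac{k^n}{n!}\int_{K(j)}(-1)^j c_1(L,h^L)^n+o(k^n),
\end{equation*}
which is the asserted bound.
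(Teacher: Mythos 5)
Your curvature bookkeeping is correct and is indeed what lies behind the threshold $s+q-1$: on $M\setminus K$ the subspace where $\Theta_L\ge 0$ (dimension $\ge n-s+1$) meets the subspace where $\partial\ddbar\varrho>0$ (dimension $\ge n-q+1$) in a subspace of dimension $\ge n-(s+q-1)+1$, so $\Theta_{L,\chi}$ has at most $s+q-2$ negative eigenvalues there; this also gives $K'(j)=K(j)$ for $j\ge s+q-1$ and, since $\chi$ can be taken constant near $K$, $c_1(L,h^L_\chi)=c_1(L,h^L)$ on $K(j)$. These are exactly the facts the paper imports from \cite[Proof of Theorem 1.3]{LSW}.

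Nevertheless the proposal has a genuine gap: the optimal fundamental estimate for $\square^{k,E}_{c,\chi}$, which you yourself call ``the main obstacle,'' is never proved --- you only describe what ``one must'' arrange with $\chi$, and you do not cite it either. But this estimate \emph{is} the proof: the paper's argument consists of quoting it as \eqref{covoe} from \cite[Proposition 3.4]{LSW}, feeding it into Theorem \ref{main1}, and then making the two identifications above; everything you left open is the entire analytic content of the corollary. Moreover, your sketch of how to obtain it would not close as written, because you misread the subscript in $\square^{k,E}_{c,\chi}$: $c$ is not ``compactly supported'' but the real number defining the sublevel set $M_c=\{\varrho<c\}\Subset M$, the inner product $(\cdot,\cdot)_{c,\chi}$ being the integral over $M_c$. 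Thus $\square^{k,E}_{c,\chi}$ is the Gaffney extension on a relatively compact domain with boundary, and the estimate must hold for all $s\in\Dom(\ddbar^{k,E})\cap\Dom(\ddbar^{k,E*}_{c,\chi})$, whose dense smooth class is $B^{0,j}(M_c,L^k\otimes E)$ ($\ddbar$-Neumann forms), not compactly supported forms --- the latter are not dense in the graph norm, so an estimate on the compactly supported complex does not transfer. Accordingly, your Bochner--Kodaira--Nakano outline omits the boundary term $\int_{bM_c}\cL_{\rho}(s,s)\,dv_{bM_c}$, which is precisely where the $q$-convexity of $bM_c$ enters and which must be controlled, together with the interior $\chi$-twist, by a metric built as in Lemma \ref{lowbd_rho_lem} (cf.\ \cite[Lemma 3.5.3]{MM07}). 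Finally, note that one cannot invoke Theorem \ref{maint} ``with $q$ replaced by $s+q-1$'': that theorem assumes weak $1$-completeness, which a $q$-convex manifold with $q\ge 2$ need not satisfy; the correct abstract input, once the optimal fundamental estimate is available, is Theorem \ref{main1}.
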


The paper is organized as follows: we present basic notions and results needed in our proofs in Section 2. In Section 3, we give a proof of asymptotic estimate of dimension of the lower energy form spaces from the optimal fundamental estimate. In Section 4, we are devoted to proving our main theorem.

\section{Preliminaries} \label{Sec_pre}

\subsection{Complex manifolds}
Let $(M, \omega)$ be a Hermitian manifold of dimension $n$ with Hermitian metric $\langle\cdot,\cdot\rangle$, where $\omega$ is the fundamental form induced by the Hermitian metric. Consider arbitrary Hermitian  bundle $(E,h^E)$ on $M$. The Hermitian metrics on $M$ and $E$ induce the Hermitian metric on $T^{*p,q}M \otimes E$ for $p,q=0,1,\ldots,n$. We shall also denote  such metric by $\langle\cdot,\cdot\rangle$ and denote its pointwise norm by $|\cdot|$.
We denote the space of smooth $(p,q)$-forms over $M$ with values in $E$  by $\Omega^{p,q}(M, E)$. 
Let $\Omega^{p,q}_0(M, E)$ be the subspace of $\Omega^{p,q}(M, E)$ whose elements have compact support.
The metric $\langle\cdot,\cdot\rangle$ and the volume form $dv_M := \frac{\omega^n}{n!}$ induce a Hermitian inner product $(\cdot,\cdot)$ on $\Omega^{p,q}_0(M, E)$ by defining
\begin{equation}
	(s_{1},s_{2}):=\int_{M}\langle s_{1}(x), s_{2}(x)\rangle dv_{M}(x).
\end{equation}
Write $\norm{\cdot}^2 := (\cdot, \cdot)$.
We let $L^2_{p,q}(M, E)$ denote the completion of $\Omega^{p,q}_0(M, E)$ with respect to $(\cdot, \cdot)$.

It is well known that the Dolbeault operator $\ddbar^{E}:\Omega_0^{p,q} (M, E)\rightarrow \Omega_0^{p,q+1} (M, E)$ satisfies $(\ddbar^{E})^2 = 0$. By abuse of notation, let $\ddbar^{E}:L^2_{p,q}(M,E)\rightarrow L^2_{p,q+1}(M,E)$ denote the associated maximal $L^2$-extension, which is a closed, densely defined operator that satisfies $(\ddbar^{E})^2 = 0$. Let $\ddbar^{E*}$ denote the Hilbert space adjoint of $\ddbar^{E}$, then the Gaffney extension of Kodaira Laplacian is given by 
\begin{eqnarray}\label{eq40}\nonumber
	\Dom(\square^{E})&=&\{s\in \Dom(\ddbar^{E})\cap \Dom(\ddbar^{E*}):
	\ddbar^{E}s\in \Dom(\ddbar^{E*}),~\ddbar^{E*}s\in \Dom(\ddbar^{E}) \}, \\
	\square^{E}s&=&\ddbar^{E} \ddbar^{E*}s+\ddbar^{E*} \ddbar^{E}s \quad \mbox{for}~s\in \Dom(\square^{E}),
\end{eqnarray}
Which is a positive, self-adjoint operator  \cite[Proposition 3.1.2]{MM07}.
We define the space of harmonic $(p,q)$-forms with values in $E$ as
\begin{equation}\label{eq45}
	\cH^{p,q}(M,E):=\Ker(\square^{E})\cap  L^2_{p,q}(M,E) =\{s\in \Dom(\square^{E})\cap L^2_{p,q}(M, E): \square^{E}s=0 \}.
\end{equation}
Let the $q$-th $L^2$-Dolbeault cohomology be defined as quotient
\begin{equation}\label{eq46}
	H^{0,q}_{(2)}(M,E):=(\Ker(\ddbar^{E})\cap  L^2_{0,q}(M,E) )\big/  (\Im( \ddbar^{E}) \cap L^2_{0,q}(M,E)),
\end{equation}
while the $q$-th reduced $L^2$-Dolbeault cohomology is defined as quotient
\begin{equation}\label{eq46}
	\overline{H}^{0,q}_{(2)}(M,E):=(\Ker(\ddbar^{E})\cap  L^2_{0,q}(M,E)) \big/\overline{(\Im( \ddbar^{E}) \cap L^2_{0,q}(M,E))},
\end{equation}
where $\overline{(\,\cdot\,)}$ denotes the closure of the space with respect to $(\cdot, \cdot)$.

Besides, the Dolbeault isomorphism $H^q(M,E) \cong H^{0,q}(M,E)$ is well known, where $H^q(M,E)$ is the $q$-th sheaf cohomology of the sheaf $\mathscr{O}_M(E)$ of holomorphic section of $E$ over $M$, and $H^{0,q}(M,E)$ is the $q$-th Dolbeault cohomology. 

Suppose $(L,h^L )$ is a Hermitian line bundle on $M$, the curvature of Chern connection on $(L, h^L)$ will be denoted by $\Theta_L=\ddbar\dbar \log|s|^2_{h^{L}}$ for any local holomorphic trivializing section $s$, and the associated Chern curvature form $c_{1}(L,h^{L}):=\frac{\sqrt{-1}}{2\pi}\Theta_L$ is a real $(1,1)$-form on $M$. 
We identify the two-form $\Theta_L$ at every $x\in M$ with a linear endomorphism $Q_L$ given by
\begin{equation}\label{Ql}
    \Theta_L (\alpha, \ol{\beta}) = \langle Q_L \alpha, \ol{\beta}\rangle,\; \forall \, \alpha, \beta\in T^{1,0}_xM. 
\end{equation}

We can check that the numbers of positive, negative and zero eigenvalues of $Q_L$ are independent of the choice of the Hermitian metric $\omega$ of $M$.

\subsection{Some estimates}\label{subsec_ofe}
The following estimate should be viewed as Morrey-Kohn-H\"{o}rmander estimate that plays an important role in the solution of $\ddbar$-Neumann problem.
\begin{theorem}[{\cite[Corollary 1.4.22]{MM07}}]\label{BKNwithbd}
	Let $M$ be a smooth relatively compact domain in a Hermitian manifold $(X,\omega)$. Let $\rho\in C^\infty(X)$ such that $M=\{x\in X: \rho(x)< 0\}$ and $|d\rho|=1$ on the boundary $bM$. Let $(L,h^L)$ be a holomorphic Hermitian line bundle on $X$. Then for any $s\in B^{0,q}(M,L)$, $0\leq q\leq n$,
	\begin{equation}
		\begin{split}
			\frac{3}{2}\left(||\ddbar^L s||^2+||\ddbar^{L*}s||^2\right)
			&\geq \frac{1}{2}||(\nabla^{\til{L}})^{1,0*}\til{s}||^2+\left(\Theta_{L}\otimes K^{*}_{X}(w_j,\ov{w}_k)\ov{w}^k\wedge i_{\ov{w}_j} s,s\right)\\
			&+\int_{bM}\cL_{\rho}(s,s)dv_{bM}-\frac{1}{2}\left(||\mT^*\til{s}||^2+||\ov{\mT}\til{s}||^2+||\ov{\mT}^* \til{s}||^2\right),
		\end{split}
	\end{equation}
	where $\wi{L}:=L\otimes K^*_X$, $\wi s:=(w^1\wedge\cdots\wedge w^n\wedge s)\otimes (w_1\wedge\cdots\wedge w_n)$, and $\nabla^{\wi L}$ is the Chern connection; $\{ w_j \}_{j=1}^n$ is a local orthonormal basis of $T^{1,0}X$ with dual basis $\{ w^j\}_{j=1}^n$ of $T^{1,0*}X$; $\cL_{\rho}(\cdot,\cdot):=(\dbar\ddbar \rho)(w_j,\ov w_k)\langle \ov w^k\wedge i_{\ov w_j} \cdot,\cdot\rangle_{\Lambda^{0,q}\otimes L}$ is the Levi form of $bM$; $\mT$ is the Hermitian torsion operator and $B^{0,q}(M,L)$ is defined in Section 4 .
\end{theorem}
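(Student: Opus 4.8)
The plan is to derive this boundary estimate from the Bochner--Kodaira--Nakano (BKN) identity with Hermitian torsion, after transporting everything to forms of bidegree $(n,q)$. The reason for the transport is that on a non-K\"ahler manifold the cleanest comparison between the $\ddbar$- and $\dbar$-Laplacians, and the cleanest appearance of Kodaira--Nakano curvature positivity, occur for $(n,\bullet)$-forms. Concretely, I would use the pointwise isometry
\[
	s\longmapsto \til s:=(w^1\wedge\cdots\wedge w^n\wedge s)\otimes(w_1\wedge\cdots\wedge w_n),
\]
which identifies $s\in\Omega^{0,q}(M,L)$ with the $(n,q)$-form $\til s$ with values in $\til L:=L\otimes K^*_X$, and record that it intertwines $\ddbar^{L}$ with $\ddbar^{\til L}$ and the corresponding Hilbert adjoints. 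The Chern connection on $\til L$ decomposes as $\nabla^{\til L}=(\nabla^{\til L})^{1,0}+\ddbar^{\til L}$, so that the quantity $(\nabla^{\til L})^{1,0*}\til s$ appearing in the statement is exactly the adjoint of the $(1,0)$-part of the connection acting on the transported form.

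I would then invoke the BKN identity with torsion, which in operator form reads
\[
	\square^{\til L}_{\ddbar}=\square^{\til L}_{\dbar}+[\sqrt{-1}\,\Theta_{\til L},\Lambda]+\mathcal{R},
\]
where $\square^{\til L}_{\dbar}$ is the $\dbar$-Laplacian built from $(\nabla^{\til L})^{1,0}$, $\Lambda$ is the adjoint of exterior multiplication by $\omega$, and $\mathcal{R}$ gathers the commutators of the Hermitian torsion operator $\mT=[\Lambda,\dbar\omega]$ (and its conjugate) with $\ddbar^{\til L}$, $\dbar^{\til L}$ and their adjoints. Pairing this identity with $\til s$ and integrating over $M$, the curvature commutator becomes, after undoing the isometry, the pointwise pairing $(\Theta_{L}\otimes K^*_X(w_j,\ol w_k)\,\ol w^k\wedge i_{\ol w_j}s,s)$; moreover, since $\dbar^{\til L}\til s$ lands in bidegree $(n+1,q)=0$, the $\dbar$-Laplacian term reduces to $\norm{(\nabla^{\til L})^{1,0*}\til s}^2$ up to the boundary term extracted below.

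The crucial step is the integration by parts that isolates the boundary contribution. Since $s\in B^{0,q}(M,L)$ satisfies the $\ddbar$-Neumann boundary condition, writing $(\square^{\til L}_{\ddbar}\til s,\til s)=\norm{\ddbar^{\til L}\til s}^2+\norm{\ddbar^{\til L*}\til s}^2$ produces no boundary term, whereas expanding $(\square^{\til L}_{\dbar}\til s,\til s)$ by integrating by parts on the $(\nabla^{\til L})^{1,0}$-side leaves a genuine boundary integral, because $s$ does not satisfy the dual ($\dbar$) boundary condition. Using $M=\{\rho<0\}$ and the normalisation $|d\rho|=1$ on $bM$ to fix the outward conormal, this integral is precisely $\int_{bM}\cL_\rho(s,s)\,dv_{bM}$, entering with $+$ sign; here one must check that the Neumann condition annihilates exactly the normal component of $\til s$ along $bM$, so that only the tangential Levi form $\cL_\rho=(\dbar\ddbar\rho)(w_j,\ol w_k)\langle\ol w^k\wedge i_{\ol w_j}\cdot,\cdot\rangle$ survives. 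Getting this boundary identification and its orientation right is the main obstacle.

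Finally I would dispose of the torsion remainder $\mathcal{R}$. Its contributions to the integrated identity are cross terms pairing the torsion operators $\mT,\ol\mT$ with $\ddbar^{\til L}\til s$, $\ddbar^{\til L*}\til s$ and $(\nabla^{\til L})^{1,0*}\til s$. Applying suitably weighted Young inequalities $2|\mathrm{Re}\,(a,b)|\le \varepsilon\norm{a}^2+\varepsilon^{-1}\norm{b}^2$ to these cross terms, with weights chosen so as to absorb part of $\norm{\ddbar^{\til L}\til s}^2+\norm{\ddbar^{\til L*}\til s}^2$ and part of $\norm{(\nabla^{\til L})^{1,0*}\til s}^2$, simultaneously promotes the coefficient of $\norm{\ddbar^{L}s}^2+\norm{\ddbar^{L*}s}^2$ to $\tfrac32$ and reduces the coefficient of $\norm{(\nabla^{\til L})^{1,0*}\til s}^2$ to $\tfrac12$, at the cost of the negative torsion term $-\tfrac12(\norm{\mT^*\til s}^2+\norm{\ol\mT\til s}^2+\norm{\ol\mT^*\til s}^2)$. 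Collecting the curvature pairing (with coefficient $1$), the Levi-form boundary integral, and these estimated torsion terms then yields the stated inequality.
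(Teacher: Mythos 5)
The paper offers no proof of this statement at all: it is quoted verbatim from \cite[Corollary 1.4.22]{MM07}, so the only proof to compare against is the one in that reference. Your outline --- transporting to $(n,q)$-forms via $s\mapsto\til{s}$ with values in $\til{L}=L\otimes K_X^*$, invoking the Bochner--Kodaira--Nakano identity with torsion, using the $\ddbar$-Neumann condition so that the $(1,0)$-side integration by parts produces exactly the Levi-form boundary integral (this identity is \cite[Theorem 1.4.21]{MM07}), and then absorbing the torsion cross terms by Cauchy--Schwarz to arrive at the constants $\tfrac{3}{2}$, $\tfrac{1}{2}$ and $-\tfrac{1}{2}$ --- is precisely the argument given there, so your proposal is correct and follows essentially the same route as the proof the paper relies on.
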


Let $0\leq q\leq n$. We say the \textbf{fundamental estimate} about $(0,q)$-forms with values in $E$ holds,  provided there are compact subset $K\subset M$ and  $C>0$, such that for any $s\in \Dom(\ddbar^E)\cap\Dom(\ddbar^{E*})\cap L^2_{0,q}(M,E)$,
\begin{equation}\label{fe1}
	\|s\|^2\leq C\left(\|\ddbar^E s\|^2+\|\ddbar^{E*} s\|^2+\int_K|s|^2dv_M\right).
\end{equation}
The above fundamental estimate yields $\ov H_{(2)}^{0,q}(M,E)\cong H^{0,q}_{(2)}(M,E)$ \cite[Theorem 3.1.8]{MM07}. 

From now on, we suppose $\operatorname{rank}(E) = 1$. Denoting $\ddbar^{L^k\otimes E}$ by $\ddbar^{k,E}$, and its Hilbert space adjoint by $\ddbar^{k,E*}$, we say the \textbf{optimal fundamental estimate} about $(0,q)$-forms with values in $L^k\otimes E$ holds, provided there are compact subset $K\subset M$, and $ C>0$, such that for any $k \gg 1$ and $ s\in \Dom(\ddbar^{k,E})\cap\Dom(\ddbar^{k,E*})\cap L^2_{0,q}(M,L^k\otimes E)$,
\begin{equation}
	\left(1- \frac{C}{k} \right)\|s\|^2\leq \frac{C}{k} \left(\|\ddbar^{k,E} s\|^2+\|\ddbar^{k,E*} s\|^2\right) +\int_K|s|^2dv_M.
\end{equation}
The above optimal fundamental estimate was introduced in {\cite{LSW}}; it is clear that optimal fundamental estimate can deduce fundamental estimate for forms with values in $L^k\otimes E$.

\subsection{Local asymptotics of spectral functions of lower energy forms}
For $q=1,\ldots,n$,  let $\square^{k,E}$ be the Gaffney extension of Kodaira Laplacian  $\ddbar^{k,E} \ddbar^{k,E*}+\ddbar^{k,E*} \ddbar^{k,E}$. It is known that the spectrums of $\square^{k,E}$ lie in $[0, +\infty)$ by \cite[Proposition 3.1.2]{MM07}. Let $\Omega\subset [0, +\infty)$ be a Borel set, and denote by $E^q(\Omega) : L^2_{0,q}(M, L^k\otimes E) \rightarrow \operatorname{Im}E^q (\Omega)$ the spectral projection of $\square^{k,E}$ with respect to $\Omega$. We set $\mathcal{E}^q(\lambda,\square^{k,E}) :=\operatorname{Im}E^q\left([0, \lambda] \right)$, and call it the spectral space of $\square^{k,E}$ corresponding to energy lower than $\lambda$.  Take an orthonormal basis $\{s_j\}_{j\geq1}$ of $\mathcal{E}^q(\lambda,\square^{k,E})$ and set $B^q_{\leq \lambda}(x) := \sum_{j}|s_j(x)|^2_{L^k\otimes E}$ to be the spectral function, which is independent of the choice of orthonormal basis.

Let $c_1({L}, h^{{L}})$ be the first Chern class of $(L, h^L)$, and $M(q)$ be the subset of $M$ consisting of points on which the curvature of $(L,h^L)$ is non-degenerate and has exactly $q$ negative eigenvalues. The following two theorems can be deduced by \cite[Corollary 1.4]{HM:14}:
\begin{theorem}\label{sfae}
	Let $N_0 \geq 2n+1$, then the spectral function $B^q_{\leq k^{-N_0}}(x)$ has an asymptotic equality
	\begin{equation}
		\begin{split}
			\limsup_{k\rightarrow \infty}k^{-n}B^q_{\leq k^{-N_0}}(x)= (-1)^q 1_{{M}(q)}\frac{c_1({L},h^{{L}})^n}{{\omega}^n}({x}),
		\end{split}
	\end{equation}
	where $1_{{M}(q)}$ is the characteristic function of $M(q)$.
\end{theorem}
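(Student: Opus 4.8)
The plan is to deduce the statement from the local spectral asymptotics of Hsiao--Marinescu \cite[Corollary 1.4]{HM:14}, whose key feature is that the diagonal value $B^q_{\leq k^{-N_0}}(x)$ depends, to leading order in $k$, only on the germ of $(\omega, h^L, h^E)$ at $x$. Accordingly, I would first fix $x\in M$ and a small coordinate ball $U$ centered at $x$ on which $L$ and $E$ are trivialized and the metrics are put in normal form, so that the eigenvalues $\lambda_1(x),\dots,\lambda_n(x)$ of $Q_L$ with respect to $\omega$ can be read off. Since $\square^{k,E}$ is the Gaffney extension it is self-adjoint, and its spectral projection $E^q([0,k^{-N_0}])$ can be localized to $U$ by finite propagation speed of the wave operator $\cos(t\sqrt{\square^{k,E}})$; this is what makes the pointwise asymptotics insensitive to the behaviour of $M$ away from $U$, so that no global hypothesis and no compactness of $M$ enters the statement.

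The main analytic step is the semiclassical rescaling $u\mapsto u/\sqrt{k}$ of the normal coordinates centered at $x$. Under it the operator $k^{-1}\square^{k,E}$ converges, in the relevant local Sobolev norms, to the model Kodaira Laplacian $\square_0$ on $\mathbb{C}^n$ attached to the constant curvature with eigenvalues $\lambda_1(x),\dots,\lambda_n(x)$, the Bargmann--Fock model. Because $N_0\geq 2n+1$, the threshold $k^{-N_0}$ rescales to an energy window shrinking to $\{0\}$ (fine enough to control the error terms), so the rescaled spectral projection converges to the orthogonal projection onto $\ker\square_0$ in bidegree $(0,q)$. Evaluating the Schwartz kernel of this model projection on the diagonal is then a direct Bargmann--Fock computation: it vanishes unless $\square_0$ has a nonzero harmonic $(0,q)$-space, which by the signature rule happens exactly when $\Theta_L(x)$ is nondegenerate with precisely $q$ negative eigenvalues, i.e. $x\in M(q)$; and on $M(q)$ the diagonal value equals a fixed positive constant times $\prod_{j=1}^n |\lambda_j(x)|$. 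Since on $M(q)$ one has $\det Q_L(x)=(-1)^q\prod_j |\lambda_j(x)|$ while $c_1(L,h^L)^n/\omega^n(x)$ equals the same universal constant times $\det Q_L(x)$, this reproduces exactly $(-1)^q 1_{M(q)}\, c_1(L,h^L)^n/\omega^n(x)$, including the sign and the indicator.

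The step I expect to be the main obstacle is the uniform control of the error between the true spectral projection and the rescaled model projection, together with the verification that eigenvalues which are genuinely nonzero but still below $k^{-N_0}$ contribute only $o(k^n)$ to $B^q_{\leq k^{-N_0}}(x)$; this is precisely the spectral-gap and off-diagonal decay estimate packaged in \cite[Corollary 1.4]{HM:14}. The upper bound furnished by this localization holds in full generality, and on $M(q)$ the model computation supplies the matching lower bound, while off $M(q)$ and on the degeneracy locus the leading coefficient vanishes; taking $\limsup_{k\to\infty}$ absorbs the contribution of these small nonzero eigenvalues and yields the claimed asymptotic equality. I would finally note that the argument is purely pointwise in $x$, so it applies verbatim on the noncompact weakly $1$-complete manifold $M$ once self-adjointness of the Gaffney extension and interior elliptic regularity are in hand.
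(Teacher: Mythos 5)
Your proposal is correct and takes essentially the same approach as the paper: the paper gives no independent argument for this theorem, merely observing that it "can be deduced by" \cite[Corollary 1.4]{HM:14}, which is exactly the reduction you make (and your observation that the result is purely local, hence insensitive to non-compactness of $M$, is the point of that citation). One caveat on your expository sketch of the Hsiao--Marinescu proof, which does not affect the logical chain since you defer the hard estimates to the citation anyway: the localization there is \emph{not} achieved by finite propagation speed of $\cos\bigl(t\sqrt{\square^{k,E}}\bigr)$ --- a spectral window of width $k^{-N_0}$ cannot be resolved by the wave operator in bounded time --- but rather by the variational characterization of the spectral space (any $u\in\mathcal{E}^q(k^{-N_0},\square^{k,E})$ satisfies $\|(\square^{k,E})^m u\|\le k^{-mN_0}\|u\|$ for every $m$, and these global bounds feed into rescaled local elliptic estimates near $x$), and relatedly ``spectral-gap estimate'' is a misnomer, since the very point of \cite{HM:14} is that no spectral gap is assumed for semi-positive or degenerate curvature.
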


\begin{theorem}\label{sfbd}
	Let $K \subset M$ be a compact subset. Then there exist $C>0$, and $k_0 >0$, such that for any $ k> k_0, x \in K$, we deduce that
	\begin{equation}
		\begin{split}
			k^{-n}B^q_{\leq k^{-N_0}}(x)\leq C .
		\end{split}
	\end{equation}
\end{theorem}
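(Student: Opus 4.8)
The assertion is local and is the uniform-on-$K$ companion of Theorem \ref{sfae}; the plan is to extract it from the local analysis of the spectral kernel in \cite[Corollary 1.4]{HM:14}. I would begin with the extremal characterization of the spectral function. Since $\operatorname{rank}(E)=1$, the fibre of $\Lambda^{0,q}T^{*}M\otimes L^{k}\otimes E$ has dimension $\binom{n}{q}$; fixing a pointwise orthonormal frame of this fibre at $x_{0}$ and applying the Riesz representation to the (bounded) evaluation functionals on the spectral space, one obtains
\begin{equation}
	B^{q}_{\leq k^{-N_{0}}}(x_{0})\leq \binom{n}{q}\,\sup\left\{\,|s(x_{0})|^{2}_{L^{k}\otimes E}: s\in\mathcal{E}^{q}(k^{-N_{0}},\square^{k,E}),\ \|s\|=1\,\right\}.
\end{equation}
Hence it is enough to produce $C>0$ and $k_{0}$ with $|s(x_{0})|^{2}\leq C\,k^{n}\|s\|^{2}$ for every $s\in\mathcal{E}^{q}(k^{-N_{0}},\square^{k,E})$ and every $x_{0}\in K$ once $k>k_{0}$; the displayed inequality then yields $B^{q}_{\leq k^{-N_{0}}}(x_{0})\leq \binom{n}{q}C\,k^{n}$, which is the claim.

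For this pointwise bound I would use Berman's scaling technique as in \cite{MM07} and \cite{HM:14}. Fix $x_{0}\in K$, choose local holomorphic coordinates and trivialisations of $L$ and $E$ adapted to $x_{0}$, and introduce the rescaled variable $w=\sqrt{k}\,z$. Under this rescaling the normalised Laplacian $\tfrac{1}{k}\square^{k,E}$ converges to a model Kodaira Laplacian $\square_{0}$ on $\mathbb{C}^{n}$, a harmonic-oscillator type operator whose coefficients are determined by the eigenvalues $\lambda_{1}(x_{0}),\dots,\lambda_{n}(x_{0})$ of $Q_{L}$ at $x_{0}$. A form $s$ of energy at most $k^{-N_{0}}$ satisfies $\|(\tfrac{1}{k}\square^{k,E})^{m}s\|\leq k^{-m(N_{0}+1)}\|s\|$ for every $m$ by the spectral theorem, so after rescaling it becomes an approximate null solution of $\square_{0}$; the hypothesis $N_{0}\geq 2n+1$ ensures that this smallness persists through the Sobolev orders (roughly $m>n$) needed to pass from $L^{2}$ to $C^{0}$ control via elliptic G\aa rding estimates. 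Because the metric, $\Theta_{L}$, $h^{E}$ and their derivatives are uniformly bounded on a neighbourhood of the compact set $K$, the convergence of the rescaled operators and the constants in the attendant elliptic estimates are uniform in $x_{0}\in K$.

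The model operator $\square_{0}$ has a strictly positive spectral gap above its kernel, so its diagonal spectral value $B^{q}_{0}(0)$ is finite and depends continuously on $\lambda_{1}(x_{0}),\dots,\lambda_{n}(x_{0})$; being continuous on the compact set $K$, it is bounded there. Transporting this bound back through the uniform scaling, where the $L^{2}$-normalisation contributes the factor $k^{n}$, gives $k^{-n}B^{q}_{\leq k^{-N_{0}}}(x_{0})\leq C$ for all $x_{0}\in K$ and $k>k_{0}$. The main obstacle is precisely this \emph{uniformity}: one must check that the off-diagonal decay of the spectral kernel used to localise, the Sobolev and G\aa rding constants, and the rate of convergence of the rescaled operators are all independent of $x_{0}\in K$ and of $k$. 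This uniform control over compact subsets is exactly what \cite[Corollary 1.4]{HM:14} furnishes, after which the bound follows from the compactness of $K$ together with the continuity of the model density.
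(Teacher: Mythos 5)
Your proposal follows the same route as the paper, which offers no independent argument for this statement: both Theorem \ref{sfae} and Theorem \ref{sfbd} are simply deduced there from \cite[Corollary 1.4]{HM:14}, and your reduction --- extremal characterization of the spectral function, then a pointwise estimate $|s(x_0)|^2\leq Ck^n\|s\|^2$ for low-energy forms obtained by rescaling and semiclassical elliptic estimates, with the uniformity on $K$ taken from \cite{HM:14} --- is a faithful reconstruction of how that cited result is proved.

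One step of your sketch is nevertheless wrong as stated, and you should delete it rather than repair it: the claim that the model operator $\square_0$ ``has a strictly positive spectral gap above its kernel,'' with the final bound obtained by transporting the model density $B^q_0(0)$ back through the scaling. That claim holds only where $Q_L$ is non-degenerate, whereas the theorem is asserted for an arbitrary compact $K$, which may well contain points where the curvature of $(L,h^L)$ degenerates; such points are not exceptional here --- the whole Morse-inequality framework is organized around the locus $M(q)$ where non-degeneracy happens to hold, and Theorem \ref{sfae} explicitly carries the factor $1_{M(q)}$ for that reason. At a degenerate point the model operator has flat directions, its spectrum is continuous down to $0$, there is no gap, and the kernel density is not the right object to transport. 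Fortunately this step is redundant in your own argument: the uniform bound needs only the scaled G\aa rding/Sobolev estimate from your second paragraph. For $s\in\mathcal{E}^q(k^{-N_0},\square^{k,E})$ the spectral theorem gives $\|(\square^{k,E})^m s\|\leq k^{-N_0 m}\|s\|$ for every $m$; finitely many of these (any fixed $m$ with $2m>n$ suffices, and $N_0\geq 2n+1$ leaves ample room), fed into the rescaled elliptic estimates whose constants are uniform on a neighborhood of $K$, already yield $|s(x_0)|^2\leq Ck^n\|s\|^2$ with no non-degeneracy assumption. That is precisely why this statement, unlike Theorem \ref{sfae}, is a uniform bound making no reference to $M(q)$.
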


We give some definitions at the end of this section.
\begin{definition}
	A complex manifold $M$ is said to be weakly $1$-complete{ \cite{Nak:70}}, if there is a plurisubharmonic function $\varphi \in C^{\infty}(M, \mathbb{R})$, such that $M_c:=\{x \in M : \varphi(x) <c \} \Subset M$ for any $c \in\mathbb{R}$. Such $\varphi$ is called an exhaustion function of $M$.
\end{definition}

\begin{definition}
	A Hermitian line bundle $(L,h^L)$ on a complex manifold $M$ is said to be Griffiths $q$-positive at $x \in M$, if the curvature form $\Theta_L$ has at least $n-q+1$ positive eigenvalues at $x$, where $n =\operatorname{dim}_{\mathbb{C}} M$, $1 \leq q \leq n$. 
\end{definition}

\begin{definition}
	A complex manifold $M$ of dimension $n$ is called $q$-convex{\cite{AG:62}}, if there is a smooth function $\varrho\in C^\infty(X,\R)$ such that the sublevel set $M_c=\{ \varrho<c\}\Subset M$ for all $c\in \R$ and the complex Hessian $\dbar\ddbar\varrho$ has $n-q+1$ positive eigenvalues outside a compact subset $K\subset M$. We call $\varrho$ an exhaustion function and $K$ exceptional set.
\end{definition}

\section{Asymptotics of spectral functions of lower energy forms}\label{Sec_l2wmi}

When the optimal fundamental estimate is satisfied, {\cite[Proposition 4.2]{LSW}} gives an asymptotic estimate of the dimension of the lower energy form spaces $\dim \mathcal{E}^q(k^{-N_{0}},\square^{k,E}), N_{0}\ge 2n+1$ as follows

\begin{theorem}{\cite[Proposition 4.2]{LSW}}\label{main1}
	Let $0\leq q\leq n$. For $N_{0}\ge 2n+1$, the estimate of dimension of the lower energy form spaces 
	\begin{equation}
		\dim\mathcal{E}^q(k^{-N_{0}},\square^{k,E}) \leq\frac{k^{n}}{n!} \int_{K(q)}(-1)^q c_1(L,h^L)^n+o(k^{n})
	\end{equation}
	holds, provided there are a compact subset $K\subset M$ and $ C>0$, such that for any $k \gg 1$ and $ s\in \Dom(\ddbar^{k,E})\cap\Dom(\ddbar^{k,E*})\cap L^2_{0,q}(M,L^k\otimes E)$ 
 the inequality as follows holds,
	\begin{equation}
		\left(1- \frac{C}{k} \right)\|s\|^2\leq \frac{C}{k} \left(\|\ddbar^{k,E} s\|^2+\|\ddbar^{k,E*} 
		s\|^2\right) +\int_K|s|^2dv_M.
	\end{equation}
\end{theorem}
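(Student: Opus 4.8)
The plan is to convert the dimension of the spectral space into an integral of the spectral function over the fixed compact set $K$, using the optimal fundamental estimate as the localization device, and then to pass to the limit by combining the pointwise asymptotics of Theorem \ref{sfae} with the uniform bound of Theorem \ref{sfbd}. First I would record the purely spectral-theoretic observation that every $s\in\mathcal{E}^q(k^{-N_0},\square^{k,E})$ lies in $\Dom(\square^{k,E})\subset\Dom(\ddbar^{k,E})\cap\Dom(\ddbar^{k,E*})$ and, since the spectrum of $\square^{k,E}$ on this space is contained in $[0,k^{-N_0}]$, satisfies
\[
\|\ddbar^{k,E}s\|^2+\|\ddbar^{k,E*}s\|^2=(\square^{k,E}s,s)\le k^{-N_0}\|s\|^2 .
\]
Feeding this into the hypothesized optimal fundamental estimate gives, for $k$ large,
\[
\Big(1-\tfrac{C}{k}-\tfrac{C}{k^{N_0+1}}\Big)\|s\|^2\le\int_K|s|^2\,dv_M ,
\]
so that $\|s\|^2\le a_k^{-1}\int_K|s|^2\,dv_M$ with $a_k:=1-\tfrac{C}{k}-\tfrac{C}{k^{N_0+1}}\to1$. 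This is the crucial step: it bounds the global $L^2$-mass of a lower-energy form by its mass on the fixed compact set $K$, which is precisely how the optimal fundamental estimate defeats the non-compactness of $M$.

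Next I would sum this inequality over a finite orthonormal subset $\{s_1,\dots,s_N\}$ of $\mathcal{E}^q(k^{-N_0},\square^{k,E})$ and use $\sum_{j\le N}|s_j(x)|^2\le B^q_{\le k^{-N_0}}(x)$ to obtain
\[
N\le a_k^{-1}\int_K B^q_{\le k^{-N_0}}(x)\,dv_M(x).
\]
By Theorem \ref{sfbd} the right-hand side is at most $a_k^{-1}Ck^n\,\mathrm{vol}(K)<\infty$ independently of $N$, which simultaneously shows that $\mathcal{E}^q(k^{-N_0},\square^{k,E})$ is finite-dimensional and yields
\[
\dim\mathcal{E}^q(k^{-N_0},\square^{k,E})\le a_k^{-1}\int_K B^q_{\le k^{-N_0}}\,dv_M .
\]

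I would then divide by $k^n$ and take $\limsup$ as $k\to\infty$. Theorem \ref{sfbd} supplies the uniform domination $k^{-n}B^q_{\le k^{-N_0}}\le C$ on the compact set $K$, so the constant function $C$ is an integrable majorant on $K$ and the reverse Fatou lemma applies; combined with the pointwise limit of Theorem \ref{sfae} and the fact that $a_k^{-1}\to1$, this gives
\[
\limsup_{k\to\infty}k^{-n}\dim\mathcal{E}^q(k^{-N_0},\square^{k,E})\le\int_{K\cap M(q)}(-1)^q\frac{c_1(L,h^L)^n}{\omega^n}\,dv_M .
\]
Finally, on $M(q)$ the curvature is nondegenerate with exactly $q$ negative eigenvalues, so the density $(-1)^q c_1(L,h^L)^n/\omega^n$ is nonnegative there; hence enlarging the domain from $K\cap M(q)$ to $M(q)$ only increases the right-hand side, and with $dv_M=\omega^n/n!$ we arrive at
\[
\limsup_{k\to\infty}k^{-n}\dim\mathcal{E}^q(k^{-N_0},\square^{k,E})\le\frac{1}{n!}\int_{M(q)}(-1)^q c_1(L,h^L)^n ,
\]
which is the assertion, $M(q)$ being the set where the curvature of $(L,h^L)$ is nondegenerate with exactly $q$ negative eigenvalues that appears in the statement. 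The step I expect to be the main obstacle is the localization inequality in the first paragraph: everything downstream is soft (spectral calculus, summation, reverse Fatou, and the sign of the limiting density), whereas the reduction of the global dimension to an integral over the fixed compact $K$ is exactly where the optimal fundamental estimate must be used and where the non-compactness of $M$ is overcome.
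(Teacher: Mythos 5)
Your proposal is correct and follows essentially the same route as the paper's own proof: spectral calculus plus the optimal fundamental estimate gives the localization $\|s\|^2\le c_k\int_K|s|^2\,dv_M$ with $c_k\to 1$, summing over an orthonormal basis converts this into $\dim\mathcal{E}^q(k^{-N_0},\square^{k,E})\le c_k\int_K B^q_{\le k^{-N_0}}\,dv_M$, and the reverse Fatou lemma (dominated via Theorem \ref{sfbd}) combined with Theorem \ref{sfae} yields the asymptotic bound. One correction to your last paragraph: the paper's notation $K(q)$ means exactly $K\cap M(q)$, so the inequality you obtain at the end of your third paragraph \emph{is} the assertion; the subsequent enlargement from $K\cap M(q)$ to $M(q)$ is unnecessary and actually weakens the conclusion (indeed, on a non-compact $M$ the integral $\int_{M(q)}(-1)^q c_1(L,h^L)^n$ need not be finite), so that final step should simply be deleted.
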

Following the proof of {\cite{LSW}}, we give a more detailed proof of this theorem.
\begin{proof}
	
	By the spectral decomposition theorem, using the optimal fundamental estimate, for any $s\in \mathcal{E}^q(k^{-N_0},\square^{k,E})\subset \Dom(\square^{k,E})\cap L^2_{0,q}(M,L^k\otimes E)$,
	\begin{equation}
		\begin{split}
			&(1-\frac{C}{k})||s||^2\leq \frac{C}{k}(||\ddbar^{k,E}s||^2+||\ddbar^{E*}_{k}s||^2)+\int_{K} |s|^2 dv_M\\
			=&\frac{C}{k}(\square^{k,E}s,s)+\int_{K} |s|^2 dv_M
			=\frac{C}{k}\left(\left(\int_{\mathbb{R}}\lambda dE_{\lambda}^{q}\right)s,s\right)+\int_{K} |s|^2 dv_M\\
			=&\frac{C}{k}\int_{\mathbb{R}}\lambda d(E_{\lambda}^{q}s,s)+\int_{K} |s|^2  dv_M
			=\frac{C}{k}\int_{[0,k^{-N_{0}}]}\lambda d(E_{\lambda}^{q}s,s)+\int_{K} |s|^2  dv_M\\
			\le &\frac{C}{k} k^{-N_{0}}\int_{\mathbb{R}}d(E_{\lambda}^{q}s,s)+\int_{K} |s|^2  dv_M
			=Ck^{-N_{0}-1}\left(\left(\int_{\mathbb{R}}1dE_{\lambda}^{q}\right)s,s\right)+\int_{K} |s|^2  dv_M\\
			=&Ck^{-N_{0}-1}\|s\|^2+\int_{K} |s|^2 dv_M,	
		\end{split}		
	\end{equation}
	where $E_{\lambda}^{q}:=E^{q}(-\infty, \lambda)$, is the spectral measure of $\square^{k,E}$.
	Thus, it follows that $\|s\|^2\leq c_k\int_{K} |s|^2 dv_X$, where $c_k:=\frac{k^{N_{0}+1}}{k^{N_{0}+1}-Ck^{N_{0}}-C}$, and $c_k \to 1$, when $k\to \infty$.
	By Fatou's lemma, Theorem\ref{sfbd}, H\"{o}lder's inequality and Theorem\ref{sfae}, we get:
	\begin{equation}
		\begin{split}
			&\limsup_{k\rightarrow \infty}\left(k^{-n}\dim \mathcal{E}^q(k^{-N_0},\square^{k,E})\right)\\
			\leq&\limsup_{k\rightarrow \infty}\left(k^{-n}c_k \int_{K}B_{\leq k^{-N_0}}^q(x)dv_X(x)\right)\\
			\leq&\left(\limsup_{k\rightarrow \infty} c_k\right)\left(\limsup_{k\rightarrow \infty} \int_{K}k^{-n}B_{\leq k^{-N_0}}^q(x)dv_X(x)\right)\\
			\leq&\int_{K}\limsup_{k\rightarrow \infty}k^{-n} B_{\leq k^{-N_0}}^q(x)dv_M(x)
			\leq \int_{K(q)}(-1)^q \frac{c_1({L},h^{{L}})^n}{n!}.
		\end{split}
	\end{equation}
\end{proof}

Using the canonical isomorphism of the weakly Hodge decomposition{\cite[(3.1.21)]{MM07}} $\cH^{0,q}(M,E) \cong \overline{H}^{0,q}_{(2)}(M,E)$, and the fact that we have $\overline{H}^{0,q}_{(2)}(M,E)\cong H^{0,q}_{(2)}(M,E)$, when the fundamental estimate holds{\cite[Theorem 3.1.8]{MM07}, we have $\dim H^{0,q}_{(2)}(M,L^{k}\otimes E)= \dim \cH^{0,q}(M,L^{k}\otimes E)\le \dim\mathcal{E}^q(k^{-N_{0}},\square^{k,E})$.} This allows us to get the weak Morse inequality by Theorem \ref{main1}.

\begin{corollary}
	Let $0\leq q\leq n$. For $N_{0}\ge 2n+1$, under the same assumption as Theorem \ref{main1}, we deduce the estimate of dimension of the $L^2$-Dolbeault cohomology group
	\begin{equation}
		\dim H^{0,q}_{(2)}(M,L^{k}\otimes E) \leq\frac{k^{n}}{n!} \int_{K(q)}(-1)^q c_1(L,h^L)^n+o(k^{n}).
	\end{equation}
	
\end{corollary}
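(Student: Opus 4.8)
The plan is to deduce the cohomological estimate from the purely spectral estimate already supplied by Theorem \ref{main1}, using as a bridge the chain of identifications
\[
H^{0,q}_{(2)}(M,L^k\otimes E)\;\cong\;\overline{H}^{0,q}_{(2)}(M,L^k\otimes E)\;\cong\;\cH^{0,q}(M,L^k\otimes E).
\]
The point is that the analytically awkward quotient defining $H^{0,q}_{(2)}$ can be replaced by the kernel of the Laplacian, which is a spectral object directly controlled by the lower energy spaces $\mathcal{E}^q(k^{-N_0},\square^{k,E})$.

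First I would verify the two isomorphisms. The second one, $\overline{H}^{0,q}_{(2)}\cong\cH^{0,q}$, is the weak Hodge decomposition \cite[(3.1.21)]{MM07} and holds unconditionally. The first one, $H^{0,q}_{(2)}\cong\overline{H}^{0,q}_{(2)}$, follows from \cite[Theorem 3.1.8]{MM07} once the plain fundamental estimate \eqref{fe1} is in force; so I must check that estimate under the present hypotheses. But the hypothesis assumed in Theorem \ref{main1} is precisely the optimal fundamental estimate, and, as recorded immediately after the definition of that notion, the optimal fundamental estimate implies the fundamental estimate for forms with values in $L^k\otimes E$. Hence both isomorphisms apply and $\dim H^{0,q}_{(2)}(M,L^k\otimes E)=\dim\cH^{0,q}(M,L^k\otimes E)$.

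Next I would connect the harmonic space to the lower energy space. By the definition \eqref{eq45}, applied to the bundle $L^k\otimes E$, we have $\cH^{0,q}(M,L^k\otimes E)=\Ker(\square^{k,E})\cap L^2_{0,q}(M,L^k\otimes E)$, which is exactly the $0$-eigenspace $\operatorname{Im}E^q(\{0\})$ of $\square^{k,E}$. Since $\{0\}\subset[0,k^{-N_0}]$, this is a subspace of $\mathcal{E}^q(k^{-N_0},\square^{k,E})=\operatorname{Im}E^q([0,k^{-N_0}])$, so taking dimensions yields
\[
\dim H^{0,q}_{(2)}(M,L^k\otimes E)=\dim\cH^{0,q}(M,L^k\otimes E)\le\dim\mathcal{E}^q(k^{-N_0},\square^{k,E}).
\]

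Finally I would invoke Theorem \ref{main1}, whose hypotheses coincide with those assumed here, to bound the right-hand side by $\frac{k^n}{n!}\int_{K(q)}(-1)^q c_1(L,h^L)^n+o(k^n)$, which completes the argument. There is no genuinely hard step: the only point needing care is the logical dependence flagged above, namely that the optimal fundamental estimate hypothesized in Theorem \ref{main1} does produce the plain fundamental estimate required to identify the reduced and unreduced $L^2$-Dolbeault cohomologies; once that implication is confirmed, the proof reduces to a single comparison of dimensions.
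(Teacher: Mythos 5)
Your proposal is correct and follows essentially the same route as the paper: the weak Hodge decomposition $\cH^{0,q}\cong\overline{H}^{0,q}_{(2)}$, the identification $\overline{H}^{0,q}_{(2)}\cong H^{0,q}_{(2)}$ via the fundamental estimate (which, as you note, is implied by the optimal fundamental estimate assumed in Theorem \ref{main1}), the inclusion of the harmonic space into $\mathcal{E}^q(k^{-N_0},\square^{k,E})$, and finally Theorem \ref{main1}. Your spelling out of the inclusion $\cH^{0,q}(M,L^k\otimes E)=\operatorname{Im}E^q(\{0\})\subset\operatorname{Im}E^q([0,k^{-N_0}])$ merely makes explicit a step the paper states without comment.
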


For a $q$-convex manifold $M$ with  modified hermitian line bundle $L_{\chi}^k :=(L^k ,h_{\chi}^{L^k})$ and $(E,h^E)$, it is well known that there is $c\in \mathbb{R}$, such that  the sublevel set $M_c=\{ \varrho<c\}\Subset M$ satisfies optimal fundamental estimate {\cite[proposition 3.4]{LSW}}, where $h_{\chi}^{L^k}:= h^{L^k}e^{-k\chi(\varrho)}$ and $\chi \in C^{\infty}(\mathbb{R})$ is some suitable rapidly convex-increasing function. That is
\begin{equation}\label{covoe}
	\left(1- \frac{C}{k} \right)\|s\|_{c,\chi}^2\leq \frac{C}{k} \left(\|\ddbar^{k,E} s\|_{c,\chi}^2+\|\ddbar_{c,\chi}^{k,E*} s\|_{c,\chi}^2\right) +\int_{K^{'}}|s|_{\chi}^2dv_M,
\end{equation}
where $(s_{1},s_{2})_{c,\chi}:=\int_{M_{c}}\langle s_{1}(x), s_{2}(x)\rangle_{\chi} dv_{M}(x)$.  $\langle\cdot,\cdot\rangle_{\chi}$ is induced by $h^E$, $h_{\chi}^{L^k}$ and some suitable Hermitian metric $\omega$ on $M$.  $\ddbar_{c,\chi}^{k,E*}$ denotes the Hilbert adjoint of $\ddbar^{k,E}$ with $(\cdot,\cdot)_{c,\chi}$ and $K^{'}\Subset M_{c}$.
Thus we deduce the proof of Corollary \ref{mainc}:

\begin{proof}[Proof of Corollary \ref{mainc}]
 By Theorem \ref{main1} and (\ref{covoe}),
        \begin{equation}
		\dim\mathcal{E}^j(k^{-N_{0}},\square^{k,E}_{c,\chi}) \leq\frac{k^{n}}{n!} \int_{K^{'}(j)}(-1)^j c_1(L,h^L_\chi)^n+o(k^{n}).
	\end{equation}
 We need to show that  $h^L_\chi=h^L$ in $K(j)$ and $K^{'}(j)=K(j)$.
   These have been proved in {\cite[Proof of Theorem 1.3]{LSW}}, we omit them here.
\end{proof}

\section{Optimal fundamental estimate for weakly $1$-complete manifolds}

In this section, we will prove that a sublevel set $M_c$ of a weakly $1$-complete manifold $M$, with a Griffiths $q$-positive line bundle $(L,h^L)$ outside a compact set $K$ and a Hermitian line bundle $E$ satisfies the optimal fundamental estimate under a suitable Hermitian metric of $M$.

Consider a domain $M_{i} = \{x \in M:\, \rho(x)<0\} \Subset M$, where $\rho:M \rightarrow \mathbb{R}$ is the defining function that satisfies $|d\rho| = 1$. Let $e_N$ be the inward-pointing unit normal vector field along $ bM_i$, with component $e_N^{(0,1)}$ in $T^{0,1}M$. We have $e_N^{(0,1)} = -\sum_{k=1}^{n} w_k(\rho) \ol{w}_k$, for a local orthonormal basis $\{w_k\}_{k=1}^n$ of $T^{1,0}M$. Set $B^{0, q}(M_i , E) := \{s \in \Omega^{0,q}(\ol{M_i}, E) : i_{e_N^{(0,1)}} s =0 \;\text{on}\;  bM_i \}$, then $B^{0, q}(M_i , E) = \operatorname{Dom}(\ddbar^{E*}) \cap \Omega^{0,q}(\ol{M_i}, E)$.  In $B^{0, q}(M_i , E)$, the Hilbert space adjoint $\ddbar^{E*}_i$ of $\ddbar^{E}$ on $M_i$ agrees with the formal adjoint $\ddbar^{E*}$ of $\ddbar^{E}$  {\cite[Proposition 1.4.19]{MM07}}. Note that $\Omega^{0,q}(\ol{M_i}, E)$ is dense in $\operatorname{Dom} ( \ddbar^E )$ in the graph norm of $\ddbar^E$; $B^{0, q}(M_i , E)$ is dense in $\operatorname{Dom} ( \ddbar^{E*} )$ and in $\operatorname{Dom} ( \ddbar^E ) \cap \operatorname{Dom} ( \ddbar^{E*} )$ in the graph norms of $\ddbar^{E*}$ and $\ddbar^E + \ddbar^{E*}$, respectively {\cite[Proposition 3.5.1]{MM07}}. Let operator $\square_N := \ddbar^E\ddbar^{E*} + \ddbar^{E*}\ddbar^E$ act on $\operatorname{Dom}(\square_N) := \{s \in B^{0, q}(M_i , E): \ddbar^E s \in B^{0, q+1}(M_i , E)\}$. Its Friedrichs extension agrees with the Gaffney extension of Kodaira Laplacian, and is said to be the Kodaira Laplacian with $\ddbar$-Neumann boundary conditions {\cite[Proposition 3.5.2]{MM07}}.

From now on, we suppose that $M$ is a weakly $1$-complete manifold of dimension $n$ and $\varphi$ is the exhaustion function of $M$. Let $(L,h^L)$ and $(E,h^E)$ be holomorphic Hermitian line bundles on $M$ and $K$ be a compact set of $M$. Assume $(L,h^L)$ is Griffiths $q$-positive on $M\setminus K$ with $q\ge 1$.

By Sard's theorem, one can choose an increasing sequence of regular values $\{c_i\}$ of $\varphi$, such that $c_{i}\to \infty$ and for every $c_i$, we have $K\subset M_{i}:=\{\varphi<c_i\}$. For each  $M_i$, since every $c_i$ is regular value and $d\varphi\ne 0$ near $bM_i$, we can use cut-off functions to find a defining function $\rho_{i} := \frac{\varphi-c_i}{|d\varphi|}$ near $bM_i$ {\cite[Page 40]{MM07}}. We have $M_i=\{\rho_{i}<0\}$ and $|d\rho_{i}|_{bM_i}=1$. Because  $\varphi$ is plurisubharmonic function, the Levi form of $\rho_{i}$ is the $2$-form $\cL_{\rho_{i}} :=\dbar\ddbar\rho_{i} \ge 0 \in C^\infty(bM_{i}, T^{(1,0)*}bM_{i} \wedge T^{(0,1)*}bM_{i})$, where $T^{(1,0)}bM_{i}:=\{ v\in T^{(1,0)}M: \dbar\rho_{i}(v)=0 \}$ is the analytic tangent bundle to $bM_i$. Hence, each $M_i$ is smooth pseudoconvex domain. 

Next, let's fix some $M_i$ and assume $K\subset M_{a}\subset M_{i}\subset M_{b}\Subset M$. We can prove

\begin{lemma} \label{lowbd_rho_lem}
	Let $(L,h^L)$ be a Hermitian line bundle on complex manifold $M$, which is Griffiths $q$-positive on the outside of the compact set $K$. For any $C_L>0$ there is a metric $g^{TM}$ (with Hermitian form $\omega)$ on $M$ such that for any $j\geq q$ and any holomorphic Hermitian vector bundle $(E,h^E)$ on $M$,
	\begin{equation}
		 (\Theta_{L}(w_l,\ol{w}_k)\ol{w}^k\wedge i_{\ol{w}_l}s,s)_k\geq C_L||s||^2, \quad s\in \Omega^{0,j}_0(M_b\setminus \overline{M}_a, E),
	\end{equation}
	where $\{ w_l \}_{l=1}^n$ is a local orthonormal frame of $T^{(1,0)}M$ with dual frame $\{ w^l\}_{l=1}^n$ of $T^{(1,0)*}M$.
\end{lemma}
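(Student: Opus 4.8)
The plan is to reduce the $L^2$ inequality to a pointwise lower bound on the zeroth-order curvature operator, and then to manufacture a single Hermitian metric on the relatively compact region $W:=M_b\setminus\ol M_a$ that makes this operator uniformly positive on $(0,j)$-forms for \emph{every} $j\ge q$ simultaneously.

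First I would record that $\ol W$ is disjoint from $K$: since $K\subset M_a=\{\varphi<c_a\}$ is compact, $\max_K\varphi<c_a$, so $\ol W\subset\{c_a\le\varphi\le c_b\}$ lies where $(L,h^L)$ is Griffiths $q$-positive. Fix an auxiliary Hermitian metric $\omega_0$ and let $A$ be the curvature endomorphism defined by $\Theta_L(\alpha,\ol\beta)=\langle A\alpha,\ol\beta\rangle_{\omega_0}$ as in \eqref{Ql}. Griffiths $q$-positivity means the ordered eigenvalues $\lambda_1(x)\le\cdots\le\lambda_n(x)$ of $A$ satisfy $\lambda_q(x)>0$ on $\ol W$, so by compactness I obtain uniform constants $\delta_0:=\min_{\ol W}\lambda_q>0$ and $\Lambda:=\max_{\ol W}\max_l|\lambda_l|<\infty$.

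Next comes the reduction to a pointwise statement. Since $(\cdot,\cdot)_k$ is the integral of the pointwise pairing against $dv_M$, and since $\Theta_L$ is scalar-valued ($L$ being a line bundle) so that $T:=\sum_{l,k}\Theta_L(w_l,\ol w_k)\,\ol w^k\wedge i_{\ol w_l}$ acts as $(\text{operator on }\Lambda^{0,j})\otimes\mathrm{id}_E$, it suffices to prove $T\ge C_L\,\mathrm{Id}$ on $\Lambda^{0,j}_xM$ for every $x\in W$ and every $j\ge q$; this bound is then automatically independent of $(E,h^E)$. In a local orthonormal frame $\{w_l\}$ diagonalizing the curvature endomorphism of the chosen metric, with curvature eigenvalues $a_1\le\cdots\le a_n$, one checks $T\,\ol w^J=\big(\sum_{l\in J}a_l\big)\ol w^J$, so the least eigenvalue of $T$ on $\Lambda^{0,j}$ is the sum of the $j$ smallest $a_l$. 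Everything thus reduces to arranging $a_1+\cdots+a_q\ge C_L$; the case $j=q$ is binding, since for $j>q$ the extra terms $a_{q+1},\dots,a_j$ are positive.

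The heart of the matter, and the main obstacle, is to realize such curvature eigenvalues by a genuinely \emph{smooth} metric despite the mixed signature of $\Theta_L$ and possible eigenvalue crossings: selecting the positive and negative eigenbundles smoothly is not available. I would bypass this by functional calculus. Choose a small $\epsilon>0$ and a smooth strictly increasing $\psi\colon\R\to\R$ with $\psi(0)=0$, $\psi'(0)>0$, $-\epsilon\le\psi<0$ on $[-\Lambda,0)$, and $\psi(\delta_0)\ge C_L+(q-1)\epsilon$. Then $\Phi(\lambda):=\lambda/\psi(\lambda)$ (with $\Phi(0):=1/\psi'(0)$) is smooth and positive on $[-\Lambda,\Lambda]$, so $\Phi(A)$ is a smooth positive-definite Hermitian endomorphism and $g:=\omega_0(\Phi(A)\,\cdot\,,\cdot)$ is a smooth metric on a neighborhood of $\ol W$; extend it to a global $g^{TM}$ (Hermitian form $\omega$) agreeing with $g$ on $W$ by a partition of unity, which is harmless since $s$ is supported in $W$. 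A direct computation in an $\omega_0$-orthonormal eigenbasis $\{v_l\}$ of $A$ shows $\{v_l/\sqrt{\Phi(\lambda_l)}\}$ is $g$-orthonormal and diagonalizes the new curvature endomorphism with eigenvalues $a_l=\lambda_l/\Phi(\lambda_l)=\psi(\lambda_l)$. As $\psi$ is increasing these are already ordered, whence
\[
a_1+\cdots+a_q=\sum_{l=1}^{q}\psi(\lambda_l)\ge (q-1)(-\epsilon)+\psi(\lambda_q)\ge -(q-1)\epsilon+\psi(\delta_0)\ge C_L,
\]
using $\lambda_q\ge\delta_0$. This gives $T\ge C_L\,\mathrm{Id}$ on $\Lambda^{0,j}$ for all $j\ge q$, and integrating over $W$ yields the asserted inequality. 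The remaining verifications — smoothness and positivity of $\Phi(A)$, the identity $a_l=\psi(\lambda_l)$, and the irrelevance of the extension off $\supp s$ — are routine.
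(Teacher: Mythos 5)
Your proof is correct, and it takes a genuinely different route from the paper's. The two arguments share the same linear-algebraic core --- in a pointwise frame diagonalizing the curvature endomorphism, $\sum_{l,k}\Theta_L(w_l,\ol w_k)\,\ol w^k\wedge i_{\ol w_l}$ acts on $\Lambda^{0,j}$ with eigenvalues equal to sums of $j$ curvature eigenvalues, so everything reduces to making the $q-1$ possibly negative eigenvalues uniformly tiny and $\lambda_q,\dots,\lambda_n$ uniformly large --- but they realize the required metric quite differently. The paper works locally: at each $x$ it chooses coordinates diagonalizing $\Theta_L$ at $x$, rescales anisotropically ($g^{TU}_l=\alpha^l_i\,dz_i\otimes d\ol z_i$ with $\alpha^l_i=l$ on the first $q-1$ directions, compressing those eigenvalues to $d_i/l$, and $\alpha^l_i=l^{-1}$ on the remaining ones, amplifying them to $l\,d_i$), passes to a neighborhood of $x$ by continuity, extracts a finite subcover of $\ol{M_b\setminus\ol M_a}$ by compactness, and glues the local metrics via a partition of unity, $g^{TM}=\sum_i\rho_i g^{TV_i}_i$, after which it must check that the pointwise lower bound survives the convex combination. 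You achieve the same eigenvalue surgery in one stroke with the global, frame-free formula $g=\omega_0(\Phi(A)\cdot,\cdot)$, $\Phi(\lambda)=\lambda/\psi(\lambda)$, which converts the curvature eigenvalues into $\psi(\lambda_l)$: compressed into $[-\epsilon,0)$ on the negative side and stretched past $C_L+(q-1)\epsilon$ at $\lambda_q\ge\delta_0$. What your route buys is substantial: it eliminates the gluing step, which is the most delicate point of the paper's argument (the induced metric on $(0,j)$-forms is not affine in the metric on $TM$, so the paper's asserted identities relating the pairing and the norms under $g^{TM}$ to the $\rho_i$-weighted pairings and norms under the $g^{TV_i}_i$ require justification the paper does not really supply), and it never needs a smooth eigenframe, so eigenvalue crossings cause no trouble. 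The price is one non-elementary but standard ingredient: smoothness of $x\mapsto\Phi(A(x))$ for $\Phi\in C^\infty(\R)$ and a smooth Hermitian family $A(x)$ (e.g.\ via the Helffer--Sj\"ostrand formula). Note also that $\Phi=1/h$ where $h(\lambda)=\psi(\lambda)/\lambda$ for $\lambda\neq0$ and $h(0)=\psi'(0)$ is smooth and everywhere positive, so $\Phi>0$ on all of $\R$; consequently $\omega_0(\Phi(A)\cdot,\cdot)$ is already a genuine metric on all of $M$ and your extension by a partition of unity is actually superfluous --- only the restriction of the metric to $\supp s\subset M_b\setminus\ol M_a$ ever enters the inequality, a point on which your proof and the paper's agree.
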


\begin{proof}
	For any $x\in M_b\setminus \overline{M_a}$, let's take a local coordinate $(U,z_{1},\dots,z_{n})$ of $M$ centered at $x$, such that $\Theta_L(\frac{\partial}{\partial z_k},\frac{\partial}{\partial \overline{z}_l})|_x = 0$ if $k\neq l$, by taking a linear transformation of the local basis. Since $L$ is Griffiths $q$-positive, we can also assume that $\Theta_{x,L}$ is positive definite on the subspace of $T_{x}M$ generated by $(\frac{\dbar}{\dbar z_{q}}|_{x},\dots ,\frac{\dbar}{\dbar z_{n}}|_{x})$. For $l\in \mathbb{N}^{+}$ take the metric $g^{TU}_l:=\alpha_{i}^{l} dz_{i}\otimes d\bar{z_i}$, where 
	$\alpha_{i}^{l}:=\left\{
	\begin{aligned}
		&l^{-1}, \quad i\ge q,\\
		&l\quad,  \quad i<q.
	\end{aligned}\right.$
	Set $w_k(x) := (\alpha_{i}^{l})^{-1/2}\frac{\dbar}{\dbar z_{j}}|_{x}$, then $\{w_k(x)\}_{j=1}^{n}$ is an orthonormal basis of $T^{(1,0)}_{x}M$ with respect to metric $g_{l}^{TU}$.

	Denote by $P^{l}_{L}(x)$ the matrix of the linear endomorphism $Q^{l}_{L}(x)$ defined in \eqref{Ql} under the basis $(\frac{\dbar}{\dbar z_{1}}|_{x},\dots ,\frac{\dbar}{\dbar z_{n}}|_{x})$. Since $\Theta_L(\frac{\partial}{\partial z_k},\frac{\partial}{\partial \overline{z}_l}) = 0$ when $k\neq l$, matrix $P^{l}_{L}(x)$ is as
		\begin{equation}\label{Pl}
		P^{l}_{L}(x)=
		\begin{pmatrix}
			\frac{\Theta_{L}\left(\frac{\dbar}{\dbar z_{1}},\frac{\dbar}{\dbar \overline{z}_{1}}\right)}{l}\Big|_x &\ &\ &\ &\ &\  \\
                \  &\ddots &\ &\ &\ &\ \\
                \  &\ & \frac{\Theta_{L}\left(\frac{\dbar}{\dbar z_{q-1}},\frac{\dbar}{\dbar \overline{z}_{q-1}}\right)}{l}\Big|_x &\ &\ &\ \\
                \  &\ &\ & l\cdot\Theta_{L}(\frac{\dbar}{\dbar z_{q}},\frac{\dbar}{\dbar \overline{z}_{q}})|_x &\ &\ \\
                \  &\ &\ &\ &\ddots &\ \\
                \  &\ &\ &\ &\ &  l\cdot\Theta_{L}(\frac{\dbar}{\dbar z_{n}},\frac{\dbar}{\dbar \overline{z}_{n}})|_x
                
		\end{pmatrix}
	\end{equation}
	This is a diagonal matrix with diagonal elements being the eigenvalues of $\Theta_{x,L}$ under the metric $g^{TU}_l$.
	Suppose under the metric $g_{1}^{TU}$ the eigenvalues of $\Theta_{x,L}$ are $d_{1}\le\ d_{2}\le \cdots \le d_{n}$, and $d_{q}>0$. From (\ref{Pl}), we get the eigenvalues of $\Theta_{x,L}$ under the metric $g_{l}^{TU}$ are $\frac{1}{l}d_{1}=:d^{l}_{1}\le\cdots\le\frac{1}{l}d_{q-1}=:d_{q-1}^{l}\le ld_{q}=:d_{q}^{l}\le\cdots \le ld_{n}=:d_{n}^{l}$. Hence, for any given $C_L>0$, there exist sufficient large $l$ and $C$, such that $d_1^l>-C^{-1}$, $d_q^l>C$ and $C-(q-1)C^{-1}>C_{L}+1$.
	Let $\{w^j\}_{j=1}^{n}$ be a local orthonormal basis of $TU$ with respect to metric $g_{l}^{TU}$, and $w^{k}(x)=(\alpha_{k}^{l})^{-1/2}\frac{\dbar}{\dbar z_{k}}|_x, \forall 1\le k\le n$ be the eigenvectors of $Q^{l}_{L}(x)$. Then it follows that for any $s\in\Omega^{0,j}_0(U, E)$, 
	\begin{equation}\label{male}
		\begin{split}
			&\langle \Theta_{x,L}(w_l(x),\ol{w}_k(x))\ol{w}^k(x)\wedge i_{\ol{w}_l(x)}s(x),s(x) \rangle_{g_{l}^{TU}}\\
			=&\sum_{i=1}^{n}\langle d_{i}^{l}\ol{w}^i(x)\wedge i_{\ol{w}_i(x)}s(x),s(x) \rangle_{g_{l}^{TU}}\\
			=&\sum_{i_{\ol{w}_i(x)}s(x)\ne 0} d_{i}^{l}\langle s(x),s(x) \rangle_{g_{l}^{TU}}
			\ge \sum_{i=1}^{j} d_{i}^{l}|s(x)|^{2}_{g_{l}^{TU}}\\
			\ge &((j-q+1)C-(q-1)C^{-1})|s(x)|^{2}_{g_{l}^{TU}}\\
			\ge &(C-(q-1)C^{-1})|s(x)|^{2}_{g_{l}^{TU}}\\
                \ge &(C_{L}+1)|s(x)|^{2}_{g_{l}^{TU}}.
		\end{split}
	\end{equation}
 The forth inequality comes from $j\ge q$. By the continuity of the inequality \eqref{male}, there is a neighborhood $U^{'} \subset U$ of $x$, such that under the metric $g_{l}^{TU}$,
 \begin{equation}\label{local ineq}
     \langle \Theta_{x,L}(w_l(x),\ol{w}_k(x))\ol{w}^k(x)\wedge i_{\ol{w}_l(x)}s(x),s(x) \rangle_{g_{l}^{TU}} \ge C_{L}|s(x)|^{2}_{g_{l}^{TU}},\quad \forall x\in U^{'}, s\in\Omega^{0,j}_0(U, E),
 \end{equation}
 where $\set{w_i}_{i=1}^{n}$ is an orthonormal basis with respect to $g_{l}^{TU}$.
Summarizing the above, we get for any $x\in M$, there is a  pair $(V(x),g^{TV(x)}_{x})$, where $V(x)$ is a neighborhood of $x$ and  $g^{TV(x)}_{x}$ is a metric on $TV(x)$. Obviously, $\set{V(x)}_{x\in \overline{M_b\setminus \overline{M}_a}}$ is an open cover of relatively compact set $ M_b\setminus \overline{M}_a$, thus we can choose a finite subcover of it. Let $\set{V_i}_{i=1}^{N}$ be the subcover and $g^{TV_{i}}_{i}$ be the metric correspond to $V_i$.  There is a partition of unity $\set{\rho_i}_{i=1}^{N}$ subordinate to $\set{V_i}_{i=1}^{N}$. Define $g^{TM}:=\sum_{i=1}^{N}\rho_{i}g^{TV_{i}}_{i}$, suppose $\set{w_k}_{k=1}^{n}$ is a local orthonormal basis with respect to $g^{TM}$ and $\set{v_{k(i)}}_{k=1}^{n}$ is a local orthonormal basis with respect to  $g^{TV_{i}}_{i}$. Assume $w_{k}=a_{k}^{j(i)}v_{j(i)}, w^{k}=b^{k}_{j(i)}v^{j(i)}$, thus $a_{k}^{j(i)}b^{l}_{j(i)}=\delta_{k}^{l}$.
Under the metric $g^{TM}$ for any $x\in M_b\setminus \overline{M}_a$ and $s\in\Omega^{0,j}_0(M_b\setminus \overline{M}_a, E)$,
\begin{equation}
\begin{split}
     &\langle \Theta_{x,L}(w_l(x),\ol{w}_k(x))\ol{w}^k(x)\wedge i_{\ol{w}_l(x)}s(x),s(x) \rangle_{g^{TM}} \\
    =&-\langle \Theta_{x,L}(w_l(x),\ol{w}_k(x))\ol{w}^k(x)\wedge s(x),\ol{w}^l(x)\wedge s(x) \rangle_{g^{TM}}\\
    =&-\sum_{i=1}^{N}\rho_i(x)\langle \Theta_{x,L}(w_l(x),\ol{w}_k(x))\ol{w}^k(x)\wedge s(x),\ol{w}^l(x)\wedge s(x) \rangle_{g^{TV_{i}}_{i}}\\
    =&-\sum_{i=1}^{N}\rho_i(x) \langle \Theta_{x,L}(a_{l}^{\alpha (i)}v_{\alpha (i)}(x),\ol{a}_{k}^{\beta(i)}\ol{v}_{\beta(i)}(x))\ol{b}^{k}_{\gamma(i)}\ol{v}^{\gamma(i)}(x)\wedge s(x),\ol{b}^{l}_{\lambda(i)}\ol{v}^{\lambda(i)}(x)\wedge s(x) \rangle_{g^{TV_{i}}_{i}}\\
    =&-\sum_{i=1}^{N}\rho_i(x) a_{l}^{\alpha (i)}\ol{a}_{k}^{\beta(i)}\ol{b}^{k}_{\gamma(i)}b^{l}_{\lambda(i)}\langle \Theta_{x,L}(v_{\alpha (i)}(x),\ol{v}_{\beta(i)}(x))\ol{v}^{\gamma(i)}(x)\wedge s(x),\ol{v}^{\lambda(i)}(x)\wedge s(x) \rangle_{g^{TV_{i}}_{i}}\\
    =&-\sum_{i=1}^{N}\rho_i(x)\langle \Theta_{x,L}(v_{\alpha (i)}(x),\ol{v}_{\beta(i)}(x))\ol{v}^{\beta(i)}(x)\wedge s(x),\ol{v}^{\alpha(i)}(x)\wedge s(x) \rangle_{g^{TV_{i}}_{i}}\\
    =&\sum_{i=1}^{N}\rho_i(x)\langle \Theta_{x,L}(v_{\alpha (i)}(x),\ol{v}_{\beta(i)}(x))\ol{v}^{\beta(i)}(x)\wedge i_{\ol{v}_{\alpha(i)}(x)}s(x), s(x) \rangle_{g^{TV_{i}}_{i}}\\
    \ge& \sum_{i=1}^{N}\rho_i(x)C_{L}|s(x)|^{2}_{g^{TV_{i}}_{i}}=C_{L}|s(x)|^{2}_{g^{TM}}.
\end{split}
\end{equation}
 The second to last inequality comes from \eqref{local ineq}. Finally, we can  finish the proof by taking integral of $\langle \Theta_{x,L}(w_l(x),\ol{w}_k(x))\ol{w}^k(x)\wedge i_{\ol{w}_l(x)}s(x),s(x) \rangle_{g^{TM}}$ on $M_b\setminus \overline{M}_a$.
 
\end{proof}

We should note that the proof is inspired by {\cite[Lemma 3.5.3]{MM07}}. From now on, we will assign the Hermitian metric $\omega$ in (\ref{lowbd_rho_lem}) to the given $1$-weakly complete manifold $M$. It's well known that the Hilbert space adjoint $\ddbar^{E*}_i$ on $M_i$ agrees with the formal adjoint $\ddbar^{E*}$ on $B^{0, j}(M_i , E)$, for any relatively compact domain $M_i$ in $M$ and any holomorphic vector bundle $E$,  $1\leq j \leq n$.

\begin{lemma}\label{keylem}
	Let $M$ be a weakly $1$-complete manifold, $(E,h^E)$ be a Hermitian line bundle and $(L,h^L)$ be a Griffiths $q$-positive line bundle on the outside of a compact set $K$, we deduce  the inequality below, 
	\begin{equation}
		||s||_i^2\leq \frac{C_1}{k}( ||\ddbar^{k,E} s||_i^2+||\ddbar^{k,E*} s||_i^2 )
	\end{equation}
	for $s\in B^{0,j}(M_i,L^k\otimes E)$ with $\supp(s)\subset M_b\setminus \overline{M}_a$, $j\geq q$ and $k\gg 1$,
	where $C_1>0$ and the $L^2$-norm $||\cdot||_i$ is given by $\omega$, $h^{L^k}$ and $h^E$ on $M_i$.
\end{lemma}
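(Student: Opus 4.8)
The plan is to feed the Morrey--Kohn--H\"ormander estimate of Theorem~\ref{BKNwithbd} with the twisted bundle $L^k\otimes E$ on the pseudoconvex domain $M_i$, and to observe that among the resulting terms only the curvature contribution of $L$ grows linearly in $k$, so that for $k$ large it absorbs every remaining $k$-independent error.

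First I would apply Theorem~\ref{BKNwithbd} on $M_i$ with the line bundle there replaced by $L^k\otimes E$ and $\rho=\rho_i$. For $s\in B^{0,j}(M_i,L^k\otimes E)$ the Hilbert adjoint coincides with the formal adjoint $\ddbar^{k,E*}$, so the left-hand side is exactly $\tfrac32(\|\ddbar^{k,E}s\|_i^2+\|\ddbar^{k,E*}s\|_i^2)$. Writing $\wi L:=L^k\otimes E\otimes K^*_X$, the curvature entering the right-hand side is
\begin{equation*}
	\Theta_{\wi L}=k\,\Theta_L+\Theta_E+\Theta_{K^*_X}.
\end{equation*}
The estimate then bounds $\tfrac32(\|\ddbar^{k,E}s\|_i^2+\|\ddbar^{k,E*}s\|_i^2)$ below by the sum of the nonnegative gradient term, the curvature term with $\Theta_{\wi L}$, the boundary Levi-form integral, minus one half of the three torsion terms $\|\mT^*\wi s\|_i^2+\|\ov{\mT}\wi s\|_i^2+\|\ov{\mT}^*\wi s\|_i^2$.

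Next I would discard the two manifestly nonnegative contributions: the gradient term $\tfrac12\|(\nabla^{\wi L})^{1,0*}\wi s\|_i^2\ge 0$, and the boundary integral, which is nonnegative because each $M_i$ is pseudoconvex (as $\varphi$ is plurisubharmonic one has $\cL_{\rho_i}\ge 0$, and the $\ddbar$-Neumann condition built into $B^{0,j}$ makes $\int_{bM_i}\cL_{\rho_i}(s,s)\,dv_{bM_i}\ge 0$). For the curvature I split $\Theta_{\wi L}=k\Theta_L+(\Theta_E+\Theta_{K^*_X})$ and apply Lemma~\ref{lowbd_rho_lem} to the bundle $L^k\otimes E$; since the pointwise estimate there concerns only the form part and is insensitive to the line bundle in which $s$ takes values (using $j\ge q$), this yields
\begin{equation*}
	k\big(\Theta_L(w_l,\ov w_k)\ov w^k\wedge i_{\ov w_l}s,\,s\big)_i\ge kC_L\|s\|_i^2 .
\end{equation*}
Because $\supp(s)\subset M_b\setminus\ov M_a$ is relatively compact while $\Theta_E$, $\Theta_{K^*_X}$ and the Hermitian torsion $\mT$ (which depends only on $\omega$) are all independent of $k$, on this fixed region they act as bounded operators, so $|((\Theta_E+\Theta_{K^*_X})(\cdots)s,s)_i|\le C_2\|s\|_i^2$ and $\|\mT^*\wi s\|_i^2+\|\ov{\mT}\wi s\|_i^2+\|\ov{\mT}^*\wi s\|_i^2\le C_3\|s\|_i^2$ with $C_2,C_3$ independent of $k$, where I also use $\|\wi s\|_i=\|s\|_i$.

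Combining the above gives $\tfrac32(\|\ddbar^{k,E}s\|_i^2+\|\ddbar^{k,E*}s\|_i^2)\ge(kC_L-C_2-\tfrac12C_3)\|s\|_i^2$. Since $C_L$ may be taken arbitrarily large in Lemma~\ref{lowbd_rho_lem}, I fix it once and then, for every $k$ large enough that $kC_L-C_2-\tfrac12C_3\ge\tfrac12kC_L$, obtain $\|s\|_i^2\le\tfrac{3}{kC_L}(\|\ddbar^{k,E}s\|_i^2+\|\ddbar^{k,E*}s\|_i^2)$, i.e.\ the claim with $C_1=3/C_L$. I expect the main obstacle to be the bookkeeping that isolates this clean $k$-scaling: one must verify that every term other than the $L$-curvature is bounded uniformly in $k$ on the compact support region --- in particular that $\mT$ is a zeroth-order (hence bounded) operator and that $\wi s$ is an isometric copy of $s$ --- and that the boundary and gradient terms really carry the sign that lets them be dropped.
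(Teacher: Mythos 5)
Your proposal is correct and follows essentially the same route as the paper's own proof: apply Theorem~\ref{BKNwithbd} to $L^k\otimes E$ on the pseudoconvex domain $M_i$, split $\Theta_{L^k\otimes E\otimes K^*_X}=k\Theta_L+\Theta_{E\otimes K^*_X}$, drop the nonnegative gradient and Levi-form boundary terms, invoke Lemma~\ref{lowbd_rho_lem} for the $k$-linear curvature term, and absorb the $k$-independent torsion and $\Theta_{E\otimes K^*_X}$ contributions for $k$ large, ending with $C_1=3/C_L$. The only cosmetic difference is that the paper verifies the nonnegativity of $\int_{bM_i}\cL_{\rho_i}(s,s)\,dv_{bM_i}$ by an explicit expansion of $\dbar\ddbar\rho_i$ using $\varphi-c_i=0$ on $bM_i$ and the $\ddbar$-Neumann condition, where you cite pseudoconvexity directly.
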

\begin{proof}
	Let $\rho_{i}$ be the defining function of $M_i$. Using Theorem \ref{BKNwithbd}, for any $s \in B^{0,j}(M_i,L^k\otimes E),0\leq j\leq n$, we deduce that 
	\begin{equation}\label{eq_BKN}
		\begin{split}
			&\frac{3}{2}(||\ddbar^{k,E} s||_i^2+||\ddbar^{k,E*} s||_i^2)\\
			\geq &(  \Theta_{L^k\otimes E\otimes K^*_X}(w_j,\ov{w}_k)\ov{w}^k\wedge i_{\ov{w}_j} s,s)_i
			+\int_{bM_i}\cL_{\rho_i}(s,s)dv_{bM_i}
			-\frac{1}{2}(||\mT^*s||_i^2+||\ov{\mT}\til{s}||_i^2+||\ov{\mT}^* \til{s}||_i^2)\\
			=& k( \Theta_{L}(w_j,\ov{w}_k)\ov{w}^k\wedge i_{\ov{w}_j} s,s)_i
			+( \Theta_{E\otimes K^*_X}(w_j,\ov{w}_k)\ov{w}^k\wedge i_{\ov{w}_j} s,s)_i
			-\frac{1}{2}(||\mT^*s||_i^2+||\ov{\mT}\til{s}||_i^2+||\ov{\mT}^* \til{s}||_i^2)\\
			+&\int_{bM_i}\cL_{\rho_i}(s,s)dv_{bM_i}.
		\end{split}
	\end{equation}
	Since $\varphi-c_{i}=i_{e_N^{(0,1)}} s =0$ on $bM_i$ and $\varphi$ is a plurisubharmonic function on $M$, we have	
	\begin{equation}\label{Levi}
		\,\,\,\,\,\begin{split}
			&\int_{bM_i}\cL_{\rho_i}(s,s)dv_{bM_i}\\
			=&\int_{bM_i}\langle \dbar\ddbar \rho_{i}(w_j,\ov{w}_k)\ov{w}^k\wedge i_{\ov{w}_j} s,s\rangle dv_{bM_i}
			=\int_{bM_i}\left\langle \dbar\ddbar\left(\frac{\varphi-c_i}{|d\varphi|}\right)(w_j,\ov{w}_k)\ov{w}^k\wedge i_{\ov{w}_j} s,s\right\rangle dv_{bM_i}\\
			=&\int_{bM_i}\frac{1}{|d\varphi|}\langle \dbar\ddbar\varphi(w_j,\ov{w}_k)\ov{w}^k\wedge i_{\ov{w}_j}s,s\rangle dv_{bM_i}
			+\int_{bM_i}\left\langle \dbar\frac{1}{|d\varphi|}\wedge\ddbar\varphi(w_j,\ov{w}_k)\ov{w}^k\wedge i_{\ov{w}_j}s,s\right\rangle dv_{bM_i}\\
			+&\int_{bM_i}\left\langle \dbar\varphi\wedge\ddbar\frac{1}{|d\varphi|}(w_j,\ov{w}_k)\ov{w}^k\wedge i_{\ov{w}_j}s,s\right\rangle dv_{bM_i}
			+\int_{bM_i}(\varphi-c_i)\left\langle \dbar\ddbar\frac{1}{|d\varphi|}(w_j,\ov{w}_k)\ov{w}^k\wedge i_{\ov{w}_j}s,s\right\rangle dv_{bM_i}\\
			=&\int_{bM_i}\frac{1}{|d\varphi|}\langle \dbar\ddbar\varphi(w_j,\ov{w}_k)\ov{w}^k\wedge i_{\ov{w}_j}s,s\rangle dv_{bM_i}
			\ge 0.
		\end{split}
	\end{equation}
	
	Notice that $M_i$ is a relatively compact set of $M$ and $\mT^*, \ov{\mT}, \ov{\mT}^*, \Theta_{E\otimes K^*_X}$ are smooth operators on $M$, hence there is a $C^{'}\ge 0$ such that,
	\begin{equation}\label{tau}
		(\Theta_{E\otimes K^*_X}(w_j,\ov{w}_k)\ov{w}^k\wedge i_{\ov{w}_j} s,s)_i
		-\frac{1}{2}(||\mT^*s||_i^2+||\ov{\mT}\til{s}||_i^2+||\ov{\mT}^* \til{s}||_i^2
		\ge -C^{'}||s||_i^2 .
	\end{equation}
	
	Using (\ref{eq_BKN}), (\ref{Levi}), (\ref{tau}) and combining Lemma\ref{lowbd_rho_lem}, we get
	\begin{equation}\label{eq_posline}	
		\frac{3}{2}(||\ddbar^{k,E} s||_i^2+||\ddbar^{k,E*} s||_i^2)
		\ge (kC_{L}-C^{'})||s||_i^2 .
	\end{equation} 	
	
	Take $k_{0}:=2[\frac{C^{'}}{C_L}]+1$, then for any $k\ge k_0$, we have $kC_L -C^{'}>\frac{k}{2}C_L$, which means
	\begin{equation}
		\frac{3}{2}(||\ddbar^{k,E} s||_i^2+||\ddbar^{k,E*} s||_i^2)
		\ge (kC_{L}-C^{'})||s||_i^2 \ge \frac{k}{2}C_L||s||_i^2.
	\end{equation}
	Let $C_1 :=\frac{3}{C_L}$, then we get
	\begin{equation}
		||s||_i^2\le \frac{3}{kC_L}(||\ddbar^{k,E} s||_i^2+||\ddbar^{k,E*} s||_i^2)=\frac{C_1}{k}(||\ddbar^{k,E} s||_i^2+||\ddbar^{k,E*} s||_i^2).
	\end{equation}
\end{proof}

\begin{remark}
	In {\cite[Lemma3.3]{LSW}}, for $q$-convex manifolds, to get the same Lemma as (\ref{keylem}), they must modify the Hermitian metric $h^L$ of line bundle $L$ by using some rapidly increasing-convex function $\chi$. But we don't need to do this, since the positive property in our case is at the curvature of line bundle $L$.
\end{remark}

Finally, we are in the position to prove the optimal fundamental estimate for $M_i$ with $L$.

\begin{proposition}\label{1coxFE}
	Let $M$ be a weakly $1$-complete  manifold of dimension $n$, $(L,h^L)$ be a Hermitian line bundle on complex manifold $M$, which is Griffiths $q$-positive on the outside of the compact set $K\subset M_i$ and $(E,h^E)$ be a Hermitian line bundle. Then there exist a compact subset $K'\subset M_i$ with $K\subset K'$ and $C>0$ such that for sufficiently large $k$, we deduce that
	\begin{equation}
		(1-\frac{C}{k})||s||_i^2\leq \frac{C}{k}(||\ddbar^{k,E}s||_i^2+||\ddbar^{k,E*}_{i}s||_i^2)+\int_{K'} |s|^2 dv_M
	\end{equation}
	for any $s\in \Dom(\ddbar^{k,E})\cap \Dom(\ddbar^{k,E*}_{i})\cap L^2_{0,j}(M_{i},L^k\otimes E)$ and $q\leq j \leq n$,
	where the $L^2$-norm is given by $\omega$, $h^{L^k}$ and $h^E$ on $M_i$.	
	
\end{proposition}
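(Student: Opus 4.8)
The plan is to localize $s$ by a smooth partition of unity and reduce to Lemma~\ref{keylem}, which already controls forms supported away from $\ol{M}_a\supset K$. First I would reduce to $s\in B^{0,j}(M_i,L^k\otimes E)$: this space is dense in $\Dom(\ddbar^{k,E})\cap\Dom(\ddbar^{k,E*}_i)$ in the graph norm of $\ddbar^{k,E}+\ddbar^{k,E*}_i$ by \cite[Proposition 3.5.1]{MM07}, and each term of the desired inequality is continuous in this norm (in particular $\int_{K'}|s|^2\,dv_M\le \|s\|_i^2$), so it suffices to prove the estimate on $B^{0,j}$ and pass to the limit.

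Next I would choose smooth real functions $\chi_1,\chi_2$ on $M$ with $\chi_1^2+\chi_2^2\equiv 1$, with $\chi_1\equiv 1$ on an open neighborhood of $\ol{M}_a$ and $\chi_1\equiv 0$ (hence $\chi_2\equiv 1$) near $bM_i$, and with $K':=\supp\chi_1\Subset M_i$; such a pair exists (e.g.\ $\chi_1=\cos\theta$, $\chi_2=\sin\theta$ for a suitable smooth $\theta$). Since $\chi_1^2+\chi_2^2=1$ pointwise, one gets the exact splitting
\[
\|s\|_i^2=\|\chi_1 s\|_i^2+\|\chi_2 s\|_i^2,
\]
where $\|\chi_1 s\|_i^2\le\int_{K'}|s|^2\,dv_M$ already has the desired shape. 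For the second term I would verify that $\chi_2 s\in B^{0,j}(M_i,L^k\otimes E)$ (the boundary condition is inherited since $\chi_2$ is a scalar, so $i_{e_N^{(0,1)}}(\chi_2 s)=\chi_2\,i_{e_N^{(0,1)}}s=0$ on $bM_i$) and that $\supp(\chi_2 s)\subset M_b\setminus\ol{M}_a$ (because $\chi_2$ vanishes near $\ol{M}_a$ while $\ol{M_i}\Subset M_b$). As $q\le j\le n$, Lemma~\ref{keylem} then gives
\[
\|\chi_2 s\|_i^2\le\frac{C_1}{k}\left(\|\ddbar^{k,E}(\chi_2 s)\|_i^2+\|\ddbar^{k,E*}_i(\chi_2 s)\|_i^2\right).
\]

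The crucial step is controlling the commutators. Writing $\ddbar^{k,E}(\chi_2 s)=\chi_2\,\ddbar^{k,E}s+\ddbar\chi_2\wedge s$ and, using that on $B^{0,j}$ the Hilbert adjoint equals the formal adjoint, $\ddbar^{k,E*}_i(\chi_2 s)=\chi_2\,\ddbar^{k,E*}_i s+e(\ddbar\chi_2)\,s$ with $e(\ddbar\chi_2)$ the zeroth-order contraction associated to $\ddbar\chi_2$, the key observation is that both commutators $[\ddbar^{k,E},\chi_2]$ and $[\ddbar^{k,E*}_i,\chi_2]$ are zeroth-order operators given by exterior/interior multiplication by $\ddbar\chi_2$, so their norms are bounded by $\sup|\ddbar\chi_2|$ \emph{independently of $k$}; this $k$-independence is exactly what preserves the optimal $\frac{C}{k}$ structure. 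With $|\chi_2|\le 1$ and $\supp\ddbar\chi_2\subset K'$ this yields, for a $k$-independent $C_2$,
\[
\|\ddbar^{k,E}(\chi_2 s)\|_i^2+\|\ddbar^{k,E*}_i(\chi_2 s)\|_i^2\le 2\left(\|\ddbar^{k,E}s\|_i^2+\|\ddbar^{k,E*}_i s\|_i^2\right)+C_2\int_{K'}|s|^2\,dv_M.
\]
Substituting and summing the two displays gives
\[
\|s\|_i^2\le\frac{2C_1}{k}\left(\|\ddbar^{k,E}s\|_i^2+\|\ddbar^{k,E*}_i s\|_i^2\right)+\left(1+\frac{C_1C_2}{k}\right)\int_{K'}|s|^2\,dv_M,
\]
and bounding $\int_{K'}|s|^2\,dv_M\le\|s\|_i^2$ in the last term and transposing restores its coefficient to exactly $1$, so that $C:=\max\{2C_1,\,C_1C_2\}$ finishes the estimate, after which density completes the proof.

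The main obstacle I anticipate is the bookkeeping around the cutoff: ensuring $\chi_2 s$ genuinely lands in $B^{0,j}(M_i,L^k\otimes E)$ with support inside $M_b\setminus\ol{M}_a$ so that Lemma~\ref{keylem} applies, and—more importantly—verifying that the commutator error terms are both zeroth order and $k$-independent, since any hidden $k$-growth there would destroy the $1-\frac{C}{k}$ form of the estimate. The absorption of the localization term to recover coefficient $1$, and the final passage to the limit by density, are routine once this $k$-independence is secured.
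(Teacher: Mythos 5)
Your proposal is correct and follows essentially the same route as the paper's proof: reduce by density to $B^{0,j}(M_i,L^k\otimes E)$, localize with a cutoff supported away from $K$ so that Lemma~\ref{keylem} applies, bound the cutoff commutators by $k$-independent zeroth-order terms, and absorb the error so as to recover the $1-\frac{C}{k}$ coefficient. The only difference is cosmetic bookkeeping: you use a quadratic partition $\chi_1^2+\chi_2^2=1$ to get the exact splitting $\|s\|_i^2=\|\chi_1 s\|_i^2+\|\chi_2 s\|_i^2$ and commute directly with $\chi_2$, whereas the paper uses a single cutoff $\zeta$ (and $\eta=1-\zeta$) together with the inequality $\|s\|_i^2\le\|\zeta s\|_i^2+\int_{K'}|s|^2\,dv_M$, which changes only the numerical constants.
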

\begin{proof}
	First, we suppose  $s\in B^{0,j}(M_i,L^k\otimes E)$ with $j\ge q$ .
	
	Assume $a<a^{'}<c_i<b^{'}<b\in\mathbb{R}$. Let $\zeta\in C^\infty_0(M_b,\R)$ with $\supp(\zeta)\subset M_b\setminus \ov{M}_a$ such that $0\leq \zeta \leq 1$ and $\zeta=1$ on $M_{b^{'}}\setminus \ov{M}_{a^{'}}$. We define $K':=\ov{M}_{a^{'}}$. Then we deduce that
	\begin{equation}\label{K'lem}
		\begin{split}
			&||\zeta s||_i^2+\int_{K'}|s|^2 dv_M\\
			=&\int_{M_i}|\zeta s|^2dv_M+\int_{K'}|s|^2 dv_M
			=\int_{M_i\setminus \ov{M}_a}|\zeta s|^2dv_M+\int_{\ov{M}_{a^{'}}}|s|^2 dv_M\\
			=&\int_{M_i\setminus \ov{M}_{a^{'}}}|\zeta s|^2 dv_M
			+\int_{\ov{M}_{a^{'}}\setminus \ov{M}_a }|\zeta s|^2 dv_M
			+\int_{\ov{M}_{a^{'}}}|s|^2 dv_M\\
			=&\int_{M_i\setminus \ov{M}_{a^{'}}}|s|^2 dv_M
			+\int_{\ov{M}_{a^{'}}}|s|^2 dv_M
			+\int_{\ov{M}_{a^{'}}\setminus \ov{M}_a }|\zeta s|^2 dv_M\\
			=&||s||_i^2+\int_{\ov{M}_{a^{'}}\setminus \ov{M}_a }|\zeta s|^2 dv_M
			\ge ||s||_i^2.
		\end{split}
	\end{equation}

	We know $\zeta s\in \Omega^{0,j}(\ov{M}_i,L^k\otimes E)$, and $i_{e_{\bn}^{(0,1)}}(\phi s)=i_{e_{\bn}^{(0,1)}}( s)=0$ on $bM_i$, since $\zeta s$  coincides with $s$ in $M_{b^{'}}\setminus \ov{M}_{a^{'}}$, which is a neighborhood of $bM_i$. Thus $\zeta s \in B^{0,j}(M_i,L^k\otimes E)$ with $\supp(\zeta s)\subset M_b\setminus \ov{M}_a$. By Lemma \ref{keylem}, there exists $C_1>0$ such that for $j\geq q$ and $k\gg 1$, we have
	\begin{equation}\label{eq_mid}
		||\zeta s||_i^2\leq \frac{C_1}{k}( ||\ddbar^{k,E} (\zeta s)||_i^2+||\ddbar^{k,E*} (\zeta s)||_i^2 ).
	\end{equation}
	Let $\eta:=1-\zeta$ and $C^{''}:=\sup_{x\in M_i}|d\eta(x)|_{g^{T^*M}}^2>0$. Then, for any $s\in B^{0,j}(M_{i},L^k\otimes E)$ and $k\geq 1$, we now claim that
	\begin{equation}\label{cuteq_k}
		\frac{1}{k}(||\ddbar^{k,E} (\zeta s)||_i^2+||\ddbar^{k,E*}(\zeta s)||_i^2)
		\leq \frac{5}{k}(||\ddbar^{k,E} s||_i^2+||\ddbar^{k,E*} s||_i^2)+\frac{12C^{''}}{k}||s||_i^2.
	\end{equation}
	Notice that $0\leq \eta\leq 1$, $\eta=1$ on $\ov M_{a}$ and $\eta=0$ on $\ov M_i\setminus \ov{M}_{a^{'}}$, thus $\eta \in C^\infty_0(M_i)$. So $\eta s\in \Omega^{0,j}_0(M_i,L^k\otimes E)\subset B^{0,j}(M_i,L^k\otimes E)$.
	For simplifying notations, we use $\ddbar$ and $\ddbar^*$ instead of $\ddbar^{k,E}$ and $\ddbar^{k,E*}$ respectively. 
	It follows that $||\ddbar (\eta s)||^2=||\ddbar\eta\wedge s+\eta \ddbar s||^2\quad\mbox{and} \quad
	||\ddbar^*(\eta s)||^2=||\eta\ddbar^*s-i_{\ddbar\eta}s||^2$.
	And notice that $||\ddbar\eta\wedge s||^2\leq C^{''}||s||^2$ and $||i_{\ddbar\eta}s||^2\leq C^{''}||s||^2$. 
	By Cauchy's inequality, we get 
	\begin{equation}
		\begin{split}
			&||\ddbar (\zeta s)||_i^2+||\ddbar^*(\zeta s)||_i^2\\
			\le &2(||\ddbar s-\ddbar(\zeta s)||_i^2+||\ddbar^* s-\ddbar^*(\zeta s)||_i^2)+2(||\ddbar s||_i^2+||\ddbar^* s||_i^2)\\
			=&2(||\ddbar (\eta s)||_i^2+||\ddbar^* (\eta s)||_i^2)+2(||\ddbar s||_i^2+||\ddbar^* s||_i^2)\\
			=&2(||\ddbar\eta\wedge s+\eta \ddbar s||_i^2+||\eta\ddbar^*s-i_{\ddbar\eta}s||_i^2)+2(||\ddbar s||_i^2+||\ddbar^* s||_i^2)\\
			\le & 3(||\eta \ddbar s||_i^2+||\eta\ddbar^*s||_i^2)+6(||\ddbar\eta\wedge s||_i^2+||i_{\ddbar\eta}s||_i^2)+2(||\ddbar s||_i^2+||\ddbar^* s||_i^2)\\
			\le & 3(|| \ddbar s||_i^2+||\ddbar^*s||_i^2)+6(C^{''}||s||_i^2+C^{''}||s||_i^2)+2(||\ddbar s||_i^2+||\ddbar^* s||_i^2)\\
			=& 5(|| \ddbar s||_i^2+||\ddbar^*s||_i^2)+12C^{''}||s||_i^2.
		\end{split}
	\end{equation}
	This finishes the proof of \eqref{cuteq_k}.

	Next by applying \eqref{K'lem}, \eqref{eq_mid} and \eqref{cuteq_k}, we obtain
	\begin{equation}
		||s||_i^2-\int_{K'}|s|^2dv_M\leq \frac{5C_1}{k}(||\ddbar^{k,E} s||_i^2+||\ddbar^{k,E*} s||_i^2)+\frac{12C_1C^{''}}{k}||s||_i^2.
	\end{equation}
	Since $B^{0,j}(M_i,L^k\otimes E)$ is dense in $\Dom(\ddbar^{k,E})\cap \Dom(\ddbar^{k,E*}_{i})\cap L^2_{0,j}(M_{i},L^k\otimes E)$ with respect to the graph norm of $\ddbar^{k,E}+\ddbar^{k,E*}_{i}$ and $\ddbar^{k,E*}_{i}$  coincides with $\ddbar^{k,E*}$ on $B^{0,j}(M_i,L^k\otimes E)$, we can complete this proof by taking $C:=\max \{5C_1, 12C_1C^{''}\}.$
	
\end{proof}

This proof is inspired by {\cite [Proposition 3.4]{LSW}}. Using Proposition {\ref{1coxFE}} and Theorem {\ref{main1}}, in the end, we can prove the main theorem.

\begin{proof}[Proof of Theorem \ref{maint}]
	By the choice of $M_i$, we only need to prove for any $i$ the inequality holds. By Proposition {\ref{1coxFE}} and Theorem {\ref{main1}}, we deduce that
	\begin{equation}
		\dim\mathcal{E}^j(k^{-N_{0}},\square^{k,E}_{i}) \leq\frac{k^{n}}{n!} \int_{K^{'}(j)}(-1)^j c_1(L,h^L)^n+o(k^{n}).
	\end{equation}
	It only needs to prove that $K(j)=K^{'}(j)$ for $j\ge q$. Since $K\subset K^{'}$, we have $K(j)\subset K^{'}(j)$. For the opposite direction, since $L$ is Griffiths $q$- positive on $M\setminus K$,  $\Theta_{L}$ has $n-q+1$ positive eigenvalues in $M\setminus K$ at least. So $\Theta_{L}$ has $n-(n-q+1)=q-1<j$ negative eigenvalues at most. This completes this proof. 
\end{proof}

\begin{corollary}
	Let $M$ be a weakly $1$-complete manifold of dimension $n$. Let $(L,h^L)$ and $(E,h^E)$ be  holomorphic Hermitian line bundles on $M$. Assume $K$ is a compact subset and $(L,h^L)$ is Griffiths $q$-positive on $X\setminus K$ with $q\geq 1$. Then, there exists a sequence of relatively compact sets $\{M_i\}_{i=1}^{\infty}$ of $M$, such that $K\subset M_1$, $\cup_{i} M_i=M$ and $M_i\Subset M_{i+1}$ , and a sequence of Hermitian metrics $\{\omega_i\}_{i=1}^{\infty}$ on $M$. As $k\rightarrow \infty$, 
    for any $j\geq q$ and $i\in \mathbb{N}^+$, we have
	\begin{equation}
		\dim H^{0,j}_{(2)}(M_i,L^{k}\otimes E) \leq\frac{k^{n}}{n!} \int_{K(j)}(-1)^j c_1(L,h^L)^n+o(k^{n}).
	\end{equation}
	In particular, if $L>0$ on $X\setminus K$, we deduce that 
	\begin{equation}
		\dim H^{j}(M,L^{k}\otimes E) \leq\frac{k^{n}}{n!} \int_{K(j)}(-1)^j c_1(L,h^L)^n+o(k^{n}),
	\end{equation}
	for every $j\ge1$.
\end{corollary}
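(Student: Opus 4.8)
The plan is to prove the two displayed bounds separately, reducing each to results already in hand, with $\{M_i\}$ and $\{\omega_i\}$ taken to be the sequences produced by Theorem \ref{maint}. For the first inequality I would repeat, on the fixed sublevel set $M_i$, the Hodge-theoretic argument used for the corollary following Theorem \ref{main1}. By Proposition \ref{1coxFE} the optimal fundamental estimate holds for $(0,j)$-forms with values in $L^k\otimes E$ on $M_i$ for every $j\geq q$, so the fundamental estimate \eqref{fe1} holds there as well; therefore \cite[Theorem 3.1.8]{MM07} gives $\overline{H}^{0,j}_{(2)}(M_i,L^k\otimes E)\cong H^{0,j}_{(2)}(M_i,L^k\otimes E)$, and the weak Hodge decomposition \cite[(3.1.21)]{MM07} gives $\cH^{0,j}(M_i,L^k\otimes E)\cong\overline{H}^{0,j}_{(2)}(M_i,L^k\otimes E)$. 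Since harmonic forms lie in $\Ker(\square^{k,E}_i)$ they have zero energy, whence $\cH^{0,j}(M_i,L^k\otimes E)\subseteq\mathcal{E}^j(k^{-N_0},\square^{k,E}_i)$ and consequently
\[
\dim H^{0,j}_{(2)}(M_i,L^k\otimes E)=\dim\cH^{0,j}(M_i,L^k\otimes E)\leq\dim\mathcal{E}^j(k^{-N_0},\square^{k,E}_i).
\]
The first bound is then immediate from Theorem \ref{maint}.

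For the second inequality I would first observe that $L>0$ on $X\setminus K$ means $(L,h^L)$ is Griffiths $1$-positive there, so one takes $q=1$ and the estimate is claimed for all $j\geq 1$; moreover, off $K$ the curvature has no negative eigenvalue, so $K(j)\subset K$ for $j\geq 1$ and the right-hand side is a finite quantity concentrated near $K$. By the Dolbeault isomorphism $H^j(M,L^k\otimes E)\cong H^{0,j}(M,L^k\otimes E)$ it suffices to bound the Dolbeault cohomology of $M$, and the strategy is to dominate it by the $L^2$-group of the first part, i.e.\ to prove $\dim H^j(M,L^k\otimes E)\leq\dim H^{0,j}_{(2)}(M_i,L^k\otimes E)$ for $i$ large and then invoke the first bound.

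The crux, and the step I expect to be the main obstacle, is this last comparison between the global sheaf cohomology of $M$ and the $\ddbar$-Neumann $L^2$-cohomology on the relatively compact sublevel set $M_i$, which must exploit the positivity of $L$ near the ends of $M$. I would carry it out in two movements. First, a stabilization: for $c<c'$ with $K\subset M_c$ the bundle $L^k\otimes E$ is positive on the relatively compact annulus $\overline{M}_{c'}\setminus M_c$, so by an Andreotti--Grauert-type argument (positivity of the bundle together with pseudoconvexity of the sublevel sets, solving $\ddbar$ with $L^2$ estimates on the shell) the restriction maps $H^j(M_{c'},L^k\otimes E)\to H^j(M_c,L^k\otimes E)$ are isomorphisms for $j\geq 1$; feeding the resulting constant inverse system into the Milnor $\varprojlim^1$ exact sequence for $M=\bigcup_c M_c$ (the $\varprojlim^1$ term vanishing by surjectivity) identifies $H^j(M,L^k\otimes E)$ with $H^j(M_c,L^k\otimes E)$. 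Second, a boundary comparison: using Kohn's theory for the pseudoconvex domains $M_i$, whose fundamental estimate we have already verified, the finite-dimensional $L^2$-cohomology $H^{0,j}_{(2)}(M_i,L^k\otimes E)$ computes the Dolbeault cohomology of $M_i$, yielding $\dim H^j(M,L^k\otimes E)\leq\dim H^{0,j}_{(2)}(M_i,L^k\otimes E)$. Reconciling these two cohomology theories, sheaf cohomology of the open set versus the Gaffney/$\ddbar$-Neumann realization on it, is precisely the delicate point that the positivity hypothesis is designed to control; granting it, the desired estimate follows for every $j\geq 1$ by combining with the first bound.
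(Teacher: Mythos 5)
Your proof of the first inequality is correct and follows the paper's route exactly: optimal fundamental estimate (Proposition \ref{1coxFE}) $\Rightarrow$ fundamental estimate $\Rightarrow$ $\overline{H}^{0,j}_{(2)}(M_i,L^k\otimes E)\cong H^{0,j}_{(2)}(M_i,L^k\otimes E)$ by \cite[Theorem 3.1.8]{MM07}, combined with the weak Hodge decomposition and $\cH^{0,j}\subseteq\mathcal{E}^j(k^{-N_0},\square^{k,E}_i)$, then Theorem \ref{maint}. Nothing to change there.

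The second inequality, however, is where your proposal has a genuine gap, and you say so yourself (``granting it, the desired estimate follows''). The comparison between $H^j(M,L^k\otimes E)$ and the $\ddbar$-Neumann $L^2$-theory on $M_i$ is not a technical formality one can defer: it is the entire content of the step, and the paper disposes of it by citing Takegoshi \cite[Theorem 6.2]{Tak83}, which asserts precisely that for $L>0$ (on $X\setminus K$, in the weakly $1$-complete setting) and $k\gg 1$ one has $H^{j}(M,L^{k}\otimes E)\cong \cH^{0,j}(M_i,L^{k}\otimes E)\cong H^{0,j}_{(2)}(M_i,L^{k}\otimes E)$ for every $i$. Your two-movement sketch is a plausible outline of how such a theorem might be proved, but each movement is itself a substantial theorem in this setting, not an argument: (a) the claim that restriction maps $H^j(M_{c'},L^k\otimes E)\to H^j(M_c,L^k\otimes E)$ are isomorphisms for $j\geq 1$ needs both surjectivity (extension of classes across the shell) and injectivity, and these require genuine $L^2$/vanishing arguments with weights, valid only for $k$ large; moreover, in the Milnor sequence for $H^j(M)$ the obstruction term is $\varprojlim^1 H^{j-1}(M_c,\cdot)$, so for $j=1$ surjectivity of the maps in degree $\geq 1$ buys you nothing --- you would need a Runge-type density statement for $H^0$ restrictions to kill $\varprojlim^1 H^0$; (b) the assertion that $H^{0,j}_{(2)}(M_i,L^k\otimes E)$ ``computes the Dolbeault cohomology of $M_i$'' by ``Kohn's theory'' is the global regularity problem for the $\ddbar$-Neumann operator on pseudoconvex domains in a general complex manifold; the fundamental estimate you verified does not imply it, and this regularity statement is exactly what Takegoshi's paper establishes. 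So as written, your argument for the second bound is circular in effect: the ``delicate point'' you grant is the theorem being used. The fix is simply to invoke \cite[Theorem 6.2]{Tak83} (noting the isomorphism holds for $k\gg 1$, which is harmless for an asymptotic estimate), after which the second bound follows from the first with $q=1$.
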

\begin{proof}
	The first statement is from Theorem{\ref{maint}} and $\dim H^{0,j}_{(2)}(M_i,L^{k}\otimes E)\le \dim\mathcal{E}^j(k^{-N_{0}},\square^{k,E}_{i})$. When $L>0$, by\cite[Theorem 6.2]{Tak83}, we have $ H^{j}(M,L^{k}\otimes E)\cong \cH^{0,j}(M_i,L^{k}\otimes E)\cong  H^{0,j}_{(2)}(M_i,L^{k}\otimes E)$ for $k\gg1$ and every $i\in \mathbb{N}^+$, thus we get the second statement.
\end{proof}

\bibliographystyle{amsalpha}

\begin{thebibliography}{99999}
	
		\bibitem{AG:62}
		A.~Andreotti and H.~Grauert. 
		\newblock{\em {Th\'eor\`emes de finitude pour la
				cohomologie des espaces complexes}}, 
		\newblock{\em Bull. Soc. Math. France}, 90 (1962),
		pp.~193--259.
	
	
	\bibitem{Be04} R. Berman. {\it Bergman kernels and local holomorphic Morse inequalities,} Math. Z., 
         \textbf{248}(2) (2004), 325-344.

         
	\bibitem{Bis87} 
         J.-M. Bismut.
         \newblock {\em Demailly's asymptotic {M}orse inequalities: a heat equation proof,}
         \newblock{\em J. Funct. Anal.}, 72(1987), no.2, 263–278.

         \bibitem{BisLe91}
         J.-M. Bismut, G. Lebeau.
         \newblock  {\em Complex immersions and Quillen metrics,}
         \newblock  {\em Inst. Hautes Études Sci. Publ. Math.}(1991), no.74, ii+298 pp.
	
		\bibitem{Bou:89}
		T.~Bouche. 
		\newblock {Inegalit\'es de Morse pour la $d''$--cohomologie sur une
			vari\'et\'e non--compacte}.
		\newblock {\em Ann. Sci. Ecole Norm.Sup.}, 22 (1989),
		pp.~501--513.
	
	
	\bibitem{Dem:85}
	J.-P. Demailly.
	\newblock Champs magn\'etiques et in\'egalit\'es de {M}orse pour la
	{$d''$}-cohomologie.
	\newblock {\em Ann. Inst. Fourier (Grenoble)}, 35(4):189--229, 1985.

       \bibitem{Dem11}
       J.-P. Demailly.
       \newblock Holomorphic Morse inequalities and the Green-Griffiths-Lang conjecture.
       \newblock{\em Pure Appl. Math. Q.} 7, (2011), no.4, 1165–1207.
	

	

	
	\bibitem{HHLS20}
	C.-Y.~Hsiao, R.-T.~Huang, X.~Li and G. Shao.
	\newblock {$S^1$}-equivariant index theorems and {M}orse inequalities on
	complex manifolds with boundary.
	\newblock{\em J. Funct. Anal.}, 279(3), 2020.
	
	\bibitem{HsiaoLi:16}
	C.-Y.~Hsiao and X.~Li.
	\newblock Morse inequalities for {F}ourier components of {K}ohn-{R}ossi
	cohomology of {CR} manifolds with {$S^1$}-action.
	\newblock {\em Math. Z.}, 284(1-2):441--468, 2016.
	
	\bibitem{HM12}
	C.-Y.~Hsiao and G.~Marinescu.
	\newblock Szegő kernel asymptotics and Morse inequalities
	on CR manifolds
	\newblock{\em Math. Z.}, 271, 509–553 (2012).
	
	\bibitem{HM:14}
	C.-Y.~Hsiao and G.~Marinescu.
	\newblock Asymptotics of spectral function of lower energy forms and
	{B}ergman kernel of semi-positive and big line bundles.
	\newblock{\em Comm. Anal. Geom.}, 22(1):1--108, 2014.

        \bibitem{HS20}
	R.-T.~Huang and G. Shao.
        \newblock Morse inequalities for Fourier components of Kohn-Rossi cohomology of CR covering manifolds with $S^1$-action.
        \newblock{\em Pacific J. Math.304(2020)}, no.2, 439–462.



 
	\bibitem{LSW}
	X.~Li, G.~Shao and H.~Wang. 
	\newblock On Bergman kernel functions and weak holomorphic Morse inequalities.
	\newblock {\em Anal. Math. Phys.} 13(2023), no.5, Paper No. 68, 22 pp.
	
	\bibitem{MM07}
	X.~Ma and G.~Marinescu.
	\emph{Holomorphic {Morse} inequalities and {Bergman} kernels}, Progress in Math., vol. 254, 
        Birkh{\"a}user, Basel, 2007, 422 pp.
	
	\bibitem{M:92}
	G. Marinescu.
	\newblock Morse inequalities for {$q$}-positive line bundles over weakly
	{$1$}-complete manifolds.
	\newblock {\em C. R. Acad. Sci. Paris S\'er. I Math.}, 315(8):895--899, 1992.
	
	
	
		\bibitem{Nak:70}
		S. Nakano.
		\newblock On the inverse of monoidal transformation.
		\newblock {\em Publ. Res. Inst.Math.
			Sci.}, 6 (1970/71), pp. 483–502.
	
	
		\bibitem{Siu:84}
		Y. T. Siu.
		\newblock A vanishing theorem for semipositive line bundles over non–Kähler
		manifolds.
		\newblock{\em J. Differential Geom.}, 20 (1984), pp. 431–452.

            \bibitem{Tak83}
            K. Takegoshi.
            \newblock Global regularity and spectra of {L}aplace-{B}eltrami operators on pseudoconvex domains.
            \newblock {\em Publ. Res. Inst. Math. Sci.}, 19(1):275--304, 1983.

            \bibitem{Wit82}
            E. Witten.
            \newblock Supersymmetry and Morse theory.
            \newblock{\em J. Differential Geometry}, 17(1982), no.4, 661–692.
	

	
	
\end{thebibliography}

\end{document}